\newtheorem{theorem}{Theorem}[section]
\newtheorem{corollary}[theorem]{Corollary}
\newtheorem{lemma}[theorem]{Lemma}
\newtheorem{conjecture}[theorem]{Conjecture}
\newtheorem{definition}[theorem]{Definition}
\newtheorem{example}[theorem]{Example}
\newcommand{\R}{\mathbb R}
\newcommand{\C}{\mathbb C}
\newcommand{\Z}{\mathbb Z}
\newcommand{\CB}{\mathcal B}
\newcommand{\CC}{\mathcal C}
\newcommand{\CP}{\mathcal P}
\newcommand{\CQ}{\mathcal Q}
\begin{document}
\title{Distinct values of bilinear forms on algebraic curves}
\author{Claudiu Valculescu
  \and 
    Frank de Zeeuw
    }
\date{}
\maketitle

\vspace{-20pt}
\begin{abstract}
Let $B_M:\C^2\times\C^2\to\C$ be a bilinear form $B_M(p,q)=p^TMq$, with an invertible matrix $M\in \C^{2\times2}$.
 We prove that any finite set $S$ contained in an irreducible algebraic curve $C$ of degree $d$ in $\C^2$
 determines $\Omega_d(|S|^{4/3})$ distinct values of $B_M$,
 unless $C$ has an exceptional form.
This strengthens a result of Charalambides \cite{Ch13} in several ways. 
  
 The proof is based on that of Pach and De Zeeuw \cite {PZ14}, who proved a similar statement for the Euclidean distance function in $\R^2$.
Our main motivation for this paper is that for bilinear forms,
 this approach becomes more natural, and should better lend itself to understanding and generalization. 
\end{abstract}

\section{Introduction}\label{sec:intro}
Pach and De Zeeuw \cite{PZ14} proved that a finite set $S$ on an irreducible algebraic curve of degree $d$ in $\R^2$ determines $\Omega_d(|S|^{4/3})$ distinct Euclidean distances,
unless that curve is a line or a circle. 
In this paper we prove an analogous result for functions $\C^2\times \C^2\to \C$ of the following form, with $p=(p_x,p_y),q=(q_x,q_y)\in \C^2$:
$$
  c_1p_xq_x + c_2p_xq_y
+ c_3p_yq_x + c_4p_yq_y.
$$
We refer to such functions as \emph{bilinear forms},
and write them more compactly as 
$$B_M(p,q) := p^TMq$$
with a matrix $M\in \C^{2\times 2}$.
We assume throughout that $M$ is invertible.
For $S\subset\C^2$, we write
$\CB_M(S):=\{B_M(p,q):p,q\in S\}$,
so $|\CB_M(S)|$ is the number of distinct values of $B_M$ on $S$.

Two particular functions that we are interested in are $B_I$ and $B_A$ for  
$$I :=\begin{pmatrix}1&0\\0&1\end{pmatrix},~~~~~ 
A :=\begin{pmatrix}0&1\\-1&0\end{pmatrix}.$$
Over $\R$, $B_I(p,q) = p^Tq$ is the dot product, and $B_A(p,q)$ is twice the signed area of the triangle spanned by $p$, $q$, and the origin.
Distinct values of the dot product on various sets were considered in \cite{St10} and \cite[Chapter 9]{GIS11},
but have not been considered on algebraic curves before.
For triangle areas, Charalambides \cite{Ch13} proved (among other results) that for $S$ contained in an algebraic curve of degree $d$ in $\R^2$,
one has $|\CB_A(S)| = \Omega_d(|S|^{5/4})$, unless the curve is a line, an ellipse centered at the origin, or a hyperbola centered at the origin. 
We improve Charalambides's bound to $\Omega_d(|S|^{4/3})$, give an explicit dependence on $d$, 
and extend our bound to general bilinear forms as well as to curves in $\C^2$. 

The class of curves for which our bound does not hold is actually somewhat larger than for Charalambides, so, strictly speaking, we do not quite improve his bound in all cases.
But we show that our class of exceptional curves is best possible for general bilinear forms.
This class is captured in the following definition.

\begin{definition}\label{def:special}
We call an algebraic curve in $\C^2$ a \emph{special curve} if it is a line, or it is linearly equivalent to a curve defined by an equation of the form
$$x^k=y^\ell,~~~\text{with}~~~k,\ell\in \Z\backslash\{0\},~~~ \gcd(k,\ell)=1.$$
\end{definition}

We say that two curves $C,C'$ are \emph{linearly equivalent} if there is an invertible matrix $D\in \C^{2\times 2}$ such that $C' =DC :=\{Dp:p\in C\}$.
Because $k$ and $\ell$ are assumed to be coprime, all special curves are irreducible.
When $k$ or $\ell$ is negative, one obtains a more natural polynomial equation after multiplying by an appropriate monomial.
Thus special curves include hyperbola-like curves of the form $x^ky^\ell=1$ with coprime $k,\ell\geq 1$. Ellipses centered at the origin are also included, since these are linearly equivalent to the unit circle $(x-iy)(x+iy)=1$, which is linearly equivalent to $xy=1$. Thus all the exceptional curves of Charalambides are special.

We now show that for any special curve, there is a bilinear form that takes only a linear number of distinct values on it. 

\begin{example}
If $C$ is special, there are $M\in \C^{2\times 2}$ and $S\subset C$ such that $|B_M(S)|=O(|S|)$.
\begin{itemize}
\item Let $C$ be a line $y=c$.
Then $S=\{(2^i,c):i=1,\ldots,|S|\}$ has $|\CB_I(S)| =O(|S|)$.
\item Consider the curve $C$ given by $x^k = y^\ell$. Take
$$S := \{(2^{\ell i}, 2^{k i}): i=1,\ldots, |S|\}\subset C.$$
Then $B_I\left((2^{\ell i},2^{ki}),(2^{\ell j},2^{kj})\right)
= (2^\ell)^{i+j}+(2^k)^{i+j}$,
so $|\CB_I(S)| =O(|S|)$. 
\item 
For any other special curve $C'$, there is an invertible matrix $D$ such that $C' = DC$, for a curve $C$ defined by $x^k=y^\ell$ or $y=c$. 
Then we can choose $S\subset C$ as above, so that for $p,q\in S$, we have 
$$B_M(Dp,Dq) = p^T D^TMDq.$$
Choosing $S' = DS \subset C'$ and $M=D^{-T}D^{-1}$, we have $|\CB_M(S')| = |\CB_I(S)| = O(|S'|)$.
\end{itemize}
\end{example}

Our main theorem says that these special curves are the only curves on which $B_M$ could have a linear number of distinct values, while on any other curve $B_M$ must take significantly more values. 
See Section \ref{sec:discussion} for a discussion of extensions and generalizations.

\begin{theorem}\label{thm:main}
Let $C$ be an irreducible algebraic curve in $\C^2$ of degree $d$,
 $S\subset C$ a finite set,
 and $B_M$ a bilinear form as above.
If $C$ is not special, then $$|\CB_M(S)|=\Omega\left(d^{-14/3}|S|^{4/3}\right).$$
\end{theorem}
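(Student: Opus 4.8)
The plan is to bound the ``energy'' of $B_M$ on $S$ and then invoke Cauchy--Schwarz. Writing $n_t=|\{(p,q)\in S\times S: B_M(p,q)=t\}|$, we have $\sum_t n_t=|S|^2$, and the energy
$$E:=\sum_{t}n_t^2=\bigl|\{(p,q,p',q')\in S^4:\ B_M(p,q)=B_M(p',q')\}\bigr|.$$
By Cauchy--Schwarz, $|S|^4=\bigl(\sum_t n_t\bigr)^2\le |\CB_M(S)|\cdot E$, so it suffices to prove the upper bound $E=O\!\left(d^{14/3}|S|^{8/3}\right)$; combining the two inequalities then gives the theorem. The whole problem thus reduces to controlling the number of value-coincidence quadruples.

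To bound $E$ I would set it up as a point--curve incidence count on the surface $X:=C\times C\subset\C^4$, which is irreducible of dimension $2$ and degree $O(d^2)$. Take as points $\mathcal P:=S\times S\subset X$, with $|\mathcal P|=|S|^2$. For each $(q,q')\in S\times S$ let
$$\gamma_{q,q'}:=\{(p,p')\in X:\ (Mq)^T p=(Mq')^T p'\}=X\cap H_{q,q'},$$
the section of $X$ by the hyperplane $H_{q,q'}\subset\C^4$ with equation $(Mq)^Tp-(Mq')^Tp'=0$. These form a family $\Gamma$ of $|S|^2$ curves of degree $O(d^2)$, and by construction $(p,p')\in\gamma_{q,q'}$ exactly when $B_M(p,q)=B_M(p',q')$, so $E=I(\mathcal P,\Gamma)$ counted with the appropriate multiplicity. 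Since $M$ is invertible, the map $(q,q')\mapsto[(Mq,-Mq')]$ sending a pair to its defining linear form is at most $O(d)$-to-one, so each hyperplane is repeated $O(d)$ times and the curves genuinely have two degrees of freedom: each is determined by a point of the two-dimensional family $MC\times MC$.

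I would then apply a Szemer\'edi--Trotter-type bound for curves with two degrees of freedom. Because $X$ is two-dimensional, a generic linear projection $\pi:\C^4\to\C^2$ is finite on $X$ and carries $\mathcal P$ and $\Gamma$ to points and plane curves of degree $O(d^2)$ while changing incidence counts by at most a $d$-dependent factor; one can then feed the result into the complex Szemer\'edi--Trotter bound for curves. With $|\mathcal P|=|\Gamma|=|S|^2$ and two degrees of freedom this yields $I=O\!\left((|S|^2)^{2/3}(|S|^2)^{2/3}+|S|^2\right)=O(|S|^{8/3})$ up to a constant polynomial in $d$; tracking the degrees (all $O(d^2)$) and the multiplicities through the projection and the incidence constant is what should produce the stated power $d^{14/3}$. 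Trivial quadruples (those with $(p,q)=(p',q')$, etc.) contribute only $O(|S|^2)$ and are harmless.

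The crux, and the step I expect to be hardest, is verifying the hypothesis that makes the incidence bound applicable: that any two distinct points of $X$ lie on only $O_d(1)$ of the curves $\gamma_{q,q'}$. Two points $(p_1,p_1'),(p_2,p_2')$ lie on $\gamma_{q,q'}$ iff the linear form $(Mq,-Mq')$ satisfies two linear conditions, so the admissible $(q,q')$ form the intersection of the surface $MC\times MC$ with a two-dimensional linear space; by B\'ezout this is $O(d^2)$ points unless the intersection is positive-dimensional. The entire difficulty is therefore concentrated in showing that this intersection can be positive-dimensional only when $C$ is special. A one-parameter family of coincidences forces an algebraic functional identity for $B_M$ restricted to $C$, of additive or multiplicative type; classifying such identities (an Elekes--R\'onyai-type analysis, exploiting the bilinearity of $B_M$ and the invertibility of $M$) should show that they occur precisely when $C$ is a line or is linearly equivalent to some $x^k=y^\ell$, that is, exactly the special curves of Definition \ref{def:special}, matching the constructions in the Example. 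I would also have to peel off degenerate subfamilies such as the diagonal $\{p=p'\}$ appearing in $\gamma_{q,q}$, but these live in lower-dimensional slices and do not affect the main count.
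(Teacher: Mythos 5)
Your overall architecture (Cauchy--Schwarz on value-coincidence quadruples, recast as a point--curve incidence problem on $C\times C\subset\C^4$, generic projection to $\C^2$, Szemer\'edi--Trotter-type bound for curves with two degrees of freedom) is exactly the paper's. The gap is in the step you yourself flag as the crux. You claim that the intersection of $MC\times MC$ with the relevant $2$-plane ``can be positive-dimensional only when $C$ is special,'' and you propose to prove this by classifying additive/multiplicative functional identities. That claim is false: positive-dimensional intersections occur for \emph{every} curve, not just special ones. The two conditions $B_M(p_1,q)=B_M(p_1',q')$ and $B_M(p_2,q)=B_M(p_2',q')$ rewrite as $N q = N' q'$ for $2\times 2$ matrices $N,N'$ built from $p_1,p_2$ and $p_1',p_2'$; when $N'$ is invertible the solution set is the graph of the linear map $T=(N')^{-1}N$ restricted to $C\times C$, and it is infinite precisely when $T$ is a \emph{linear automorphism} of $C$. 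Any curve with a nontrivial linear symmetry (say $x^3+y^3=1$ under $(x,y)\mapsto(y,x)$, which is not special) produces such degenerate pairs, as do the diagonal families with $p_1=p_1'$, $p_2=p_2'$. So you cannot hope to show the bad set is empty for non-special curves; you must instead show it is \emph{small} and handle its incidences separately. The paper does this by proving that each infinite intersection is labelled by an automorphism, that each automorphism can label at most $m$ such pairs, and that a non-special curve has only $O(d^2)$ linear automorphisms, giving $O(d^2m)$ bad curves/points contributing $O(d^3m^2)$ incidences --- negligible against the main term. As written, your argument has no mechanism for this, and applying the incidence theorem to the full family would be unjustified.

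A second, related gap: the place where ``special'' enters is not an Elekes--R\'onyai-type identity classification but the purely geometric statement that an irreducible curve of degree $d$ with more than $O(d^2)$ \emph{linear} automorphisms must be a line or linearly equivalent to $x^k=y^\ell$. This is an independent and substantial piece of the proof (the paper devotes a full section to it, analyzing how automorphisms permute asymptotes and tangent lines at the origin, and bounding the induced finite group of M\"obius transformations on the line at infinity). Your sketch does not identify linear automorphisms as the relevant structure, so it has no route to this classification. A smaller omission: for $T=(N')^{-1}N$ to be well defined you need the matrices $N,N'$ to be invertible, which requires first pruning $S$ so that no line through the origin contains two of its points (costing a factor $d$); this is where part of the degree dependence comes from.
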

\begin{proof}
We outline how the proof is distributed over the paper.
By Corollary \ref{cor:main2} in Section \ref{sec:proof}, the bound holds if $M$ is invertible and $C$ has $O(d^2)$ automorphisms.
By Theorem \ref{thm:autos} in Section \ref{sec:autos}, the only curves that do not have $O(d^2)$ automorphisms are the special curves.
\end{proof}

For clarity we have chosen not to state our result in the most general form possible.
The proof in fact gives a ``bipartite'' statement (see Theorem \ref{thm:main2} and also \cite{PZ14}), and can be extended to bilinear functions with linear terms, as well as to reducible curves.
We also note that for sets on curves in $\R^2$, our proof gives, with a little extra work, a better dependence on $d$, namely $d^{-2}$ instead of $d^{-14/3}$.

Our proof follows the setup in \cite{PZ14}, which is based on that of \cite{SSS13}.
It turns out that, for bilinear forms, this setup leads to a more natural and streamlined proof than for the Euclidean distance function in \cite{PZ14}.
This was our main motivation for working out this variant in detail,
and we hope that it helps to clarify the proof of \cite{PZ14},
and increases the potential for generalization. 
We also wanted to test the limits of this approach, by extending it to complex curves and by explicitly determining the dependence on the degree of the curve.
In future work we hope to study more general polynomial functions, as well as functions on curves in higher dimensions.

Let us quickly give the relevant definitions.
A set $C\subset \C^2$ is an \textit{algebraic curve} if there is an $f\in \C[x,y]\backslash\{0\}$ such that
$C =Z(f):=\{ (x,y)\in \C^2: f(x,y) = 0 \}$.
The {\it degree} of $C$ is the minimum degree of a polynomial $f$
such that $C = Z(f)$.
The curve $C$ is \emph{irreducible} if there is an irreducible $f$ such that $C=Z(f)$.
We frequently use \emph{B\'ezout's inequality}, which states that the number of intersection points of two distinct irreducible algebraic curves in $\C^2$ is at most the product of their degrees.
In our proof, we also consider algebraic curves in $\C^4$; for their definition, we refer to \cite{Ha77}.
A crucial role in the proof is played by linear automorphisms of curves.
A \emph{linear automorphism} of an algebraic curve $C$ is an invertible linear transformation $T:\C^2\to \C^2$ such that $T(C) = C$.
We often drop the word ``linear''.



\section{Proof of Theorem \ref{thm:main}}\label{sec:proof}

In this section we give one side of the proof of Theorem \ref{thm:main}; the other side follows in Section \ref{sec:autos}.
We prove Theorem \ref{thm:main2}, a variant of Theorem \ref{thm:main} that is more convenient for the proof, and deduce Corollary \ref{cor:main2}, which, together with Theorem \ref{thm:autos}, directly implies Theorem \ref{thm:main}.

\subsection{A variant of Theorem \ref{thm:main}}

Theorem \ref{thm:main2} differs from Theorem \ref{thm:main} in the following ways. It focuses on the matrix $I$ (i.e., $B_I(p,q)=p^Tq$ is the ``dot product''), but the statement is slightly more general, in that it bounds the values of the function in a useful ``bipartite'' way;
for $S_1,S_2\subset \C^2$, it bounds the size of  
$\CB_I(S_1,S_2) := \{B_I(p,q): p\in S_1, q\in S_2\}$.
This more general form allows us to deduce the result for $B_M$.
Finally, the exceptional curves in Theorem \ref{thm:main2} are those curves that have many automorphisms. In Section \ref{sec:autos}, we show that the only curves with many automorphisms are the special curves of Definition \ref{def:special}.

\begin{theorem}\label{thm:main2}
 Let $C_1$ and $C_2$ be irreducible algebraic curves in $\C^2$, both of degree at most $d$,
 and let $S_1\subset C_1, S_2\subset C_2$ be disjoint finite sets with $|S_1|=|S_2|=n$.
If $C_1$ and $C_2$ each have $O(d^2)$ automorphisms, then
 $$|\CB_I(S_1,S_2)|=\Omega\left(d^{-14/3}n^{4/3}\right).$$
\end{theorem}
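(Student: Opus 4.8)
The plan is to bound $|\CB_I(S_1,S_2)|$ from below by a standard energy (second-moment) argument, and then reduce the resulting energy bound to an incidence problem to which a Szemer\'edi--Trotter-type theorem applies. Introduce the \emph{energy}
$$E := \left|\left\{(p,p',q,q')\in S_1\times S_1\times S_2\times S_2 : B_I(p,q)=B_I(p',q')\right\}\right|.$$
Writing $r(\lambda)=|\{(p,q)\in S_1\times S_2 : B_I(p,q)=\lambda\}|$, we have $\sum_\lambda r(\lambda)=n^2$ and $E=\sum_\lambda r(\lambda)^2$, so by Cauchy--Schwarz
$$|\CB_I(S_1,S_2)|\ \ge\ \frac{\bigl(\sum_\lambda r(\lambda)\bigr)^2}{\sum_\lambda r(\lambda)^2}\ =\ \frac{n^4}{E}.$$
Thus it suffices to prove the upper bound $E=O\bigl(d^{14/3}n^{8/3}\bigr)$.

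Next I would rewrite $E$ as an incidence count. For each pair $(q,q')\in S_2\times S_2$ define the algebraic curve
$$\gamma_{q,q'}:=\bigl\{(p,p')\in C_1\times C_1 : p^Tq=p'^Tq'\bigr\}\subset C_1\times C_1,$$
and let $P$ be the set of $n^2$ points $(p,p')$ with $p,p'\in S_1$, lying on the surface $C_1\times C_1\subset\C^4$. Then $E$ equals the number of incidences between $P$ and the family $\Gamma=\{\gamma_{q,q'}\}$. Since $C_1\times C_1$ is two-dimensional and each $\gamma_{q,q'}$ is cut out on it by a single bilinear equation, $\Gamma$ is a $2$-parameter family of curves of degree $O(d)$; projecting $C_1$ to one coordinate (a map of degree $\le d$) turns this into an incidence problem between the $n^2$ planar images (each of multiplicity $O(d^2)$) and $n^2$ plane curves of bounded degree, the setting of the incidence machinery of \cite{PZ14,SSS13}.

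The crux is to verify the hypotheses of that machinery and to track the dependence on $d$. A bound for a $2$-parameter family requires \emph{bounded multiplicity}: any two points of $P$ should lie on only $O(\mathrm{poly}(d))$ common curves, and the curves of $\Gamma$ should be essentially distinct. Two points $(p_1,p_1'),(p_2,p_2')$ lie on $\gamma_{q,q'}$ exactly when $(q,q')\in C_2\times C_2$ satisfies the two linear equations $p_i^Tq=p_i'^Tq'$ ($i=1,2$); by B\'ezout's inequality this system has $O(d^2)$ solutions \emph{unless} it has a one-dimensional family of solutions. In the latter case, solving the two equations expresses $q'$ as a fixed linear function $q'=Lq$ valid for all $q$ on a component of $C_2$, forcing $L(C_2)=C_2$, i.e. $L$ is a linear automorphism of $C_2$. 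Hence the degenerate point-pairs are governed by the automorphisms of $C_2$, and the symmetric analysis of coinciding curves $\gamma_{q,q'}$ is governed by the automorphisms of $C_1$. The hypothesis that $C_1$ and $C_2$ each have $O(d^2)$ automorphisms therefore bounds the degenerate contributions and supplies the multiplicity parameter for the incidence bound.

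Finally, applying the incidence theorem with $n^2$ points (of multiplicity $O(d^2)$), $n^2$ curves, and multiplicity $O(d^2)$ yields $E=O\bigl(d^{14/3}n^{8/3}\bigr)$ after collecting the degree factors coming from the projection, from B\'ezout, and from the incidence estimate; combined with $|\CB_I(S_1,S_2)|\ge n^4/E$ this gives the theorem. I expect the main obstacle to be exactly the degeneracy analysis above: cleanly identifying both the positive-dimensional solution families and the coincidences among the $\gamma_{q,q'}$ with linear automorphisms of $C_2$ and $C_1$ respectively (so that Theorem \ref{thm:autos} can be invoked), while simultaneously keeping every degree factor explicit in order to reach the stated exponent $d^{-14/3}$.
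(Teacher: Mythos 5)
Your proposal follows essentially the same route as the paper: the Cauchy--Schwarz energy bound, the curves $\gamma_{q,q'}$ on $C_1\times C_1\subset\C^4$ (the paper's $\widetilde{C}_{st}$, used alongside their duals $C_{ij}$ on $C_2\times C_2$), the identification of degenerate point-pairs and coinciding curves with linear automorphisms of $C_2$ and $C_1$, and the projection to the planar incidence bound of \cite{SZ14}. The one preparatory step you omit is pruning the point sets so that no two points lie on a common line through the origin (the paper's Lemma \ref{lem:Sstar}); this is what makes the $2\times 2$ linear systems invertible so that the degenerate case genuinely yields $q'=Lq$, and the resulting loss $m\geq n/d$ is precisely where the extra factor $d^{4/3}$ beyond the incidence bound's $d^{10/3}$ comes from.
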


We first deduce from this theorem a statement that is closer to Theorem \ref{thm:main}.

\begin{corollary}\label{cor:main2}
Let $C$ be an irreducible algebraic curve in $\C^2$ of degree $d$,
 $S\subset C$ a finite set,
 and $B_M$ a bilinear form.
 If $M$ is invertible and $C$ has $O(d^2)$ automorphisms, then
$$|\CB_M(S)|=\Omega\left(d^{-14/3}|S|^{4/3}\right).$$
\end{corollary}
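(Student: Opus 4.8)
The plan is to reduce the general invertible case to the "dot product" case already handled by Theorem \ref{thm:main2}. The key is the identity
$$B_M(p,q) = p^T M q = B_I(p, Mq),$$
which gives $\CB_M(S) = \CB_I(S, MS)$, where $MS := \{Mp : p\in S\}$. So if I set $C_1 := C$, $C_2 := MC$, $S_1 := S \subseteq C_1$, and $S_2 := MS \subseteq C_2$, then a lower bound on $|\CB_M(S)|$ is exactly a lower bound on $|\CB_I(S_1,S_2)|$, which is what Theorem \ref{thm:main2} provides---provided its hypotheses hold for the pair $(C_1,C_2)$.

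Next I would verify that $C_2 = MC$ inherits every hypothesis. Since $M$ is an invertible linear map, it sends $Z(f)$ to $Z(f\circ M^{-1})$, so $C_2$ is again irreducible and of degree exactly $d$. For the automorphism count, if $T$ is a linear automorphism of $C$ then $MTM^{-1}$ is a linear automorphism of $C_2 = MC$, because $MTM^{-1}(MC) = MTC = MC$. The assignment $T\mapsto MTM^{-1}$ is a bijection between the automorphism groups of $C$ and of $C_2$ (its inverse is conjugation by $M^{-1}$), so $C_2$ has exactly as many automorphisms as $C$, namely $O(d^2)$. Hence $(C_1,C_2) = (C, MC)$ satisfies all the curve hypotheses of Theorem \ref{thm:main2}.

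It remains to meet the requirement that the point sets be disjoint and of equal size. The sets $S$ and $U := MS$ each have size $N := |S|$ but need not be disjoint. Writing $I := S\cap U$ and splitting it as $I = I' \sqcup I''$ with $\bigl||I'|-|I''|\bigr|\le 1$, I set $S_1 := (S\setminus I)\cup I'$ and $S_2 := (U\setminus I)\cup I''$. These are disjoint subsets of $C_1$ and $C_2$ respectively, and after discarding at most one point they have a common size $m \ge \lfloor N/2\rfloor$. Since $S_1\subseteq S$ and $S_2\subseteq U$ we have $\CB_I(S_1,S_2)\subseteq \CB_I(S,U) = \CB_M(S)$, so Theorem \ref{thm:main2} gives
$$|\CB_M(S)| \;\ge\; |\CB_I(S_1,S_2)| \;=\; \Omega\!\left(d^{-14/3}m^{4/3}\right) \;=\; \Omega\!\left(d^{-14/3}|S|^{4/3}\right).$$
(For $|S| = O(d^2)$ the claimed bound is $O(1)$, and then it follows from the trivial estimate $|\CB_M(S)|\ge 1$ after adjusting the implied constant, which also covers the degenerate cases where $m$ is too small to be useful.)

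The deduction is short, and the one genuinely delicate point is the disjointness condition. The tempting route is to make $S$ and $MS$ disjoint by intersecting the two curves and applying B\'ezout's inequality to bound the overlap; but this breaks down exactly when $M$ is itself an automorphism of $C$, for then $C_2 = MC = C$ and the sets $S$, $MS$ lie on the \emph{same} curve and may even coincide. The combinatorial splitting above avoids this issue altogether and applies uniformly, whether or not $MC = C$; this is the step I would be most careful to state correctly.
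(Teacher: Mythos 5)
Your proof is correct and follows essentially the same route as the paper: both reduce to Theorem \ref{thm:main2} via the identity $B_M(p,q)=B_I(p,Mq)$, taking $C_1=C$ and $C_2=MC$ (and noting, as you do, that $MC$ inherits irreducibility, degree, and the automorphism count). The only difference is bookkeeping for the disjointness hypothesis --- the paper splits $S$ into two halves and applies $M$ to one of them without explicitly checking that $S_1$ and $MS_2'$ are disjoint, whereas you split only the overlap $S\cap MS$; your treatment of this point is in fact the more careful of the two.
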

\begin{proof}
We arbitrarily split $S$ into two disjoint sets $S_1,S_2'$ of the same size (discarding one point if $|S|$ is odd).
Then we set $S_2 := MS_2'$.
For $p\in S_1, q'\in S_2'$ we have $B_M(p,q') = B_I(p,Mq') = B_I(p,q)$ with $q\in S_2$.
We set $C_1:=C$ and $C_2:=MC$.
Applying Theorem \ref{thm:main2} to $S_1\subset C_1$ and $S_2\subset C_2$ gives
\[|\CB_M(S)|
=\Omega\left(|\CB_M(S_1,S_2')|\right)
=\Omega\left(|\CB_I(S_1,S_2)|\right)
=\Omega\left(d^{-14/3}|S|^{4/3}\right).\qedhere\]
\end{proof}

\subsection{Preparation}

In the rest of Section \ref{sec:proof} we prove Theorem \ref{thm:main2}.
We assume throughout that $C_1$ and $C_2$ have $O(d^2)$ automorphisms, so in particular they are not lines.

The matrices in the following definition play an important role in the proof.

\begin{definition}\label{def:matrixN}
Given two points $p_i=(x_i,y_i),p_k=(x_k,y_k)\in \C^2$, we define the matrix
$$N_{ik} := 
\begin{pmatrix}
x_i&y_i\\x_k&y_k
\end{pmatrix}.$$
\end{definition}
To ensure that these matrices behave nicely, we prepare the sets $S_1,S_2$ as follows.

\begin{lemma}\label{lem:Sstar}
There is $S^*\subset S_1$ with $|S^*|\geq n/d$ such that any line through the origin contains at most one point of $S^*$.
Consequently, for any distinct $p_i,p_k\in S^*$
the matrix $N_{ik}$ is nonsingular.
Furthermore, there is $T^*\subset S_2$ with the same property and $|T^*|=|S^*|$.
\end{lemma}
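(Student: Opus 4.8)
The plan is to bound, via B\'ezout's inequality, how many points of $S_1$ can lie on a single line through the origin, and then extract one point from each occupied line by pigeonhole. Let $L$ be any line through the origin. Since $C_1$ is irreducible and, by our standing assumption, not a line, $L$ and $C_1$ are two \emph{distinct} irreducible curves, of degrees $1$ and at most $d$ respectively. B\'ezout's inequality therefore bounds $|L\cap C_1|$ by $d$, so $L$ contains at most $d$ points of $S_1$. (Equivalently: if $L$ were contained in $C_1$, then $L=C_1$ by irreducibility, contradicting that $C_1$ is not a line.)

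Next I would extract $S^*$. After discarding the origin from $S_1$ if it happens to lie there, every remaining point $p$ lies on a unique line through the origin, namely the line it spans. Grouping the points of $S_1$ by this line partitions them into classes each of size at most $d$, so the number of occupied lines is at least $n/d$ (removing the origin changes this count by at most one and does not affect the bound). Choosing one representative from each class yields $S^*\subset S_1$ with $|S^*|\ge n/d$ in which no two points share a line through the origin.

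The ``consequently'' clause is then a one-line computation: $\det N_{ik}=x_iy_k-x_ky_i$ vanishes precisely when the rows $p_i=(x_i,y_i)$ and $p_k=(x_k,y_k)$ are linearly dependent, i.e.\ when the two nonzero points lie on a common line through the origin. Since distinct points of $S^*$ lie on distinct such lines, $N_{ik}$ is nonsingular. The set $T^*\subset S_2$ is produced by the identical argument applied to $C_2$, which is likewise irreducible and not a line; truncating the larger of the two resulting sets gives $|T^*|=|S^*|$ while keeping both sizes at least $n/d$.

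I do not expect a serious obstacle: the only genuine content is the B\'ezout step that caps each line at $d$ points, which hinges on $C_1$ being irreducible and not a line. The one piece of bookkeeping to watch is the origin, which must be excluded from $S^*$ because it lies on \emph{every} line through the origin and would otherwise force $|S^*|\le 1$.
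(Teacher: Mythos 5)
Your proof is correct and follows essentially the same route as the paper: bound $|L\cap C_1|\le d$ for each line $L$ through the origin via B\'ezout (using that $C_1$ is irreducible and not a line), then keep one representative per occupied line and truncate to equalize $|S^*|$ and $|T^*|$. Your explicit handling of the origin is a small extra care the paper's proof glosses over, and it only perturbs the count to $(n-1)/d$, which is harmless for the asymptotic statement.
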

\begin{proof}
For any line $L$ through the origin that intersects $S_1$, 
arbitrarily choose one point of $L\cap S_1$ and remove any other point.
Call the result $S^*$.
Since $C_1$ is not a line, 
by B\'ezout it contains at most $d$ points on such a line $L$, 
so $|S^*|\geq n/d$. 

Similarly pick $T^*$ from $S_2$,
and remove points from the larger set until $|S^*|=|T^*|$.
\end{proof}

\textbf{Notation:} The rest of the proof considers only $M=I$, 
so we write $B := B_I$.
We only use the points in $S^*$ and $T^*$; we set $m := |S^*|=|T^*|$ and $\CB = \CB_I(S^*, T^*)$.
Throughout this section we denote points of $C_1$ with the letter $p$, and points of $C_2$ with the letter $q$; 
for points of $S^*$ or $T^*$ we similarly use either $p_i,p_j,\ldots$ or $q_s,q_t,\ldots$ .
As said, we assume throughout that neither $C_1$ nor $C_2$ is a line.


\subsection{Quadruples and curves}
To prove the theorem, we find lower and upper bounds on the number of quadruples in
$$\CQ := \{(p_i,p_j,q_s,q_t):p_i,p_j\in S^*,q_s,q_t\in T^*,\: B(p_i, q_s) = B(p_j, q_t) \}.$$
The lower bound is easily obtained using the Cauchy-Schwarz inequality.

\begin{lemma}\label{lem:lowerbound}
For $\CB$ and $\CQ$ as above we have $ |\CQ| \ge m^4/|\CB|$.
\end{lemma}
\begin{proof}
  Write $B^{-1}(b) := \{(p_i,q_s)\in S^*\times T^*: B(p_i,q_s) = b\}$ for $b\in \CB$.
  Then
\[|\CQ|\geq \sum_{b\in\CB} |B^{-1}(b)|^2
  \geq \frac{1}{|\CB|} \left( \sum_{b\in\CB} |B^{-1}(b)|   \right)^2 = \frac{m^4}{|\CB|}.\qedhere\]
 \end{proof}

To obtain an upper bound on $|\CQ|$, we relate it to an incidence problem for points and curves in $\C^4$.
We define algebraic curves $C_{ij}$ and $\widetilde{C}_{st}$ in $\C^4$ as follows:
For each pair of points $p_i,p_j \in S^*$,
we set
$$C_{ij}:=\{ (q, q') \in \C^4: \: q,q' \in C_2, \: B(p_i,q) = B(p_j, q') \},$$
and for each pair of points $q_s,q_t\in T^*$, we set
$$\widetilde{C}_{st}:=\{ (p, p') \in \C^4: \: p,p' \in C_1, \: B(p,q_s) = B(p', q_t) \}.$$

\begin{lemma}\label{lem:curves}
The sets $C_{ij}$ and $\widetilde{C}_{st}$ are algebraic curves in $\C^4$ of degree at most $d^2$.
\end{lemma}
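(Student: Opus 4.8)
The plan is to exhibit $C_{ij}$ as the common zero set of three polynomials in the coordinate ring $\C[q,q']$ of $\C^4$, where $q=(q_x,q_y)$ and $q'=(q'_x,q'_y)$. Letting $f_2$ be an irreducible defining polynomial of $C_2$ with $\deg f_2\le d$, the two equations $f_2(q)=0$ and $f_2(q')=0$ confine $q$ and $q'$ to $C_2$, and the single linear equation $\ell(q,q'):=B(p_i,q)-B(p_j,q')=0$ encodes the remaining condition. The first two equations define the product surface $C_2\times C_2\subset\C^4$, and $C_{ij}$ is exactly its intersection with the hyperplane $\{\ell=0\}$. I would establish the two assertions of the lemma—that this set has dimension $1$ and degree at most $d^2$—separately.

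For the dimension, I would use that $C_2\times C_2$ is an irreducible variety of dimension $2$, since a product of irreducible varieties over the algebraically closed field $\C$ is irreducible. It then suffices to check that $\ell$ does not vanish identically on $C_2\times C_2$: in that case $C_{ij}$ is the intersection of an irreducible surface with a hyperplane not containing it, hence pure of dimension $1$, i.e.\ a curve. If $\ell$ did vanish on $C_2\times C_2$, then $B(p_i,q)=B(p_j,q')$ for all $q,q'\in C_2$; fixing $q'$ shows $q\mapsto B(p_i,q)$ is constant on $C_2$, so $C_2$ lies on the line $\{q:B(p_i,q)=\text{const}\}$, a genuine line because $p_i\neq 0$ (the origin is not in $S^*$ by Lemma \ref{lem:Sstar}). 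This contradicts the standing assumption that $C_2$ is not a line.

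For the degree, I would compute it as the number of points in which $C_{ij}$ meets a generic hyperplane $\{\mu=0\}$ and reduce this to Bézout's inequality in the plane. Such a point satisfies $q,q'\in C_2$ together with the two linear equations $\ell=\mu=0$. Because $p_j\neq 0$, for generic $\mu$ these two equations can be solved for $(q'_x,q'_y)$ as an invertible affine-linear function $q'=\phi(q)$, and the intersection then corresponds bijectively, via $q\mapsto(q,\phi(q))$, to $C_2\cap\phi^{-1}(C_2)$. Since a generic hyperplane meets the curve $C_{ij}$ in finitely many points, this set is finite, so the degree-$\le d$ curves $C_2$ and $\phi^{-1}(C_2)$ share no component and Bézout gives at most $d\cdot d=d^2$ points; hence $\deg C_{ij}\le d^2$. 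The curves $\widetilde C_{st}$ are handled by the identical argument with $C_1,C_2$ and the two arguments of $B$ interchanged, now using $q_s,q_t\neq 0$.

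The step I expect to be most delicate is the degree computation: one must argue carefully that the generic hyperplane section really does reduce, through the substitution $q'=\phi(q)$, to an intersection of two plane curves with no common component, to which planar Bézout applies, and that the relevant genericity conditions on $\mu$—invertibility of the $2\times2$ coefficient matrix used to solve for $q'$ and of the linear part of $\phi$—can be met simultaneously. By comparison, the dimension count is routine once irreducibility of the product surface and the non-vanishing of $\ell$ are in hand.
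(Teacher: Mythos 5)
Your proof is correct. The dimension statement is handled exactly as in the paper: $C_{ij}$ is the intersection of the irreducible surface $C_2\times C_2$ with the hyperplane $\{\ell=0\}$, and the hyperplane cannot contain the surface because fixing $q'$ would force $C_2$ to be a line (using $p_i\neq 0$, which indeed follows from Lemma \ref{lem:Sstar}). Where you genuinely diverge is the degree bound. The paper simply invokes a higher-dimensional affine version of B\'ezout's inequality (citing Hartshorne and Heintz) to get $\deg(C_2)^2\cdot\deg(H_{ij})=d^2$. You instead compute a generic hyperplane section by hand: the two linear conditions $\ell=\mu=0$ are solved for $q'$ as an invertible affine map $\phi(q)$, reducing the count to $|C_2\cap\phi^{-1}(C_2)|$, where planar B\'ezout gives $d^2$ once finiteness of the generic section rules out a common component. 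Your route is more elementary and self-contained (it avoids the black-box citation, at the cost of the genericity bookkeeping you rightly flag as the delicate step), and it exploits the special structure of $H_{ij}$ as a graph-like linear condition; the paper's citation is shorter and, as used again in Lemma \ref{lem:intersections}, applies uniformly to intersections with several hyperplanes without redoing the substitution. Both yield the same bound.
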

\begin{proof}
The set $C_{ij}$ is the intersection of the irreducible surface $C_2\times C_2$ and the hyperplane $H_{ij}$ defined by the equation $B(p_i,q) =B(p_j, q')$. 
This hyperplane does not contain the surface, since then fixing $q'$ would give that $C_2$ is a line, which we assumed it is not.
By \cite[Proposition 7.1]{Ha77}, it follows that the intersection is one-dimensional, i.e. it is an algebraic curve.
By a higher-dimensional affine version of B\'ezout's inequality (see \cite[Theorem 7.7]{Ha77} or \cite[Theorem 1]{He83}), the degree of this curve is at most $\deg(C_2)^2\cdot \deg(H_{ij})=d^2$.

The same arguments apply to $\widetilde{C}_{st}$.
\end{proof}

We have $(q_s,q_t)\in C_{ij}$ if and only if $(p_i,p_j)\in \widetilde{C}_{st}$.
This suggests that we can think of the curve $\widetilde{C}_{st}$ as ``dual'' to the point $(q_s,q_t)$,
and of $(p_i,p_j)$ as dual to $C_{ij}$.

Define a point set and a curve set by
$$\CP:= T^*\times T^*,~~~~~~\CC:=\{C_{ij}:(p_i,p_j)\in S^*\times S^*\}.$$
Then a point $(q_s,q_t)\in \CP$ lies on $C_{ij}\in \CC$ if and only if
$(p_i,p_j,q_s,q_t)\in \CQ$. Thus
$$|\CQ| = I(\CP,\CC) := |\{(p,C)\in \CP\times \CC: p\in  C   \}|.$$
It is possible that some $C_{ij}$ coincide as sets, but then we consider them as separate objects.


\subsection{Intersections}
We want to apply an incidence bound to the points $\CP$ and curves $\CC$, and for that we need to control the sizes of the intersections between curves.
We define
$$\CC_{0} := \{C_{ij}\in \CC:
\text{there is a}~C_{kl}\in \CC~\text{such that}~|C_{ij}\cap C_{kl}| = \infty\}$$
and $\CC_1 := \CC\backslash \CC_0$.
Dually, we set
$$\CP_0 := \{(q_s,q_t)\in \CP:
\text{there is a}~(q_u,q_v)\in \CP~\text{such that}~
|\widetilde{C}_{st}\cap\widetilde{C}_{uv}|=\infty\}$$
and $\CP_1 := \CP\backslash \CP_0$.
Thus, the curves in $\CC_0$ are  ``bad'' curves that have large intersection with some other curve, while the points in $\CP_0$ are ``bad'' in a dual sense. 
We show that the sets $\CC_0$ and $\CP_0$ are relatively small.
For the ``good'' sets $\CP_1$ and $\CC_1$, the intersections are well-behaved, allowing us to apply an incidence bound.

With these definitions, two fortunate things happen. Whenever curves $C_{ij}$ coincide as sets, they must lie in $\CC_0$.
The curves $C_{ii}$ for any $i$, which would cause trouble in some of the statements, are also in $\CC_0$, 
because they all contain the line $\{(q,q):q\in C_2\}$.
The analogous statements hold for the dual curves and the corresponding points in $\CP$.

We now show that for $\CP_1$ and $\CC_1$, the intersections are well-behaved.

\begin{lemma}\label{lem:intersections}
For all distinct $C_{ij},C_{kl}\in \CC_1$ we have $$|C_{ij}\cap C_{kl}|\leq d^2,$$
and for any two distinct points in $\CP_1$,
there are at most $d^2$ curves in $\CC$ that contain both.
\end{lemma}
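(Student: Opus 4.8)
The plan is to bound both intersection quantities by the same device: the two curves involved always lie on a common surface, which collapses the naive bound of $d^4$ (from intersecting two curves of degree $d^2$) down to $d^2$.

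For the first statement, take distinct $C_{ij},C_{kl}\in\CC_1$. The key observation is that both curves lie on the surface $C_2\times C_2$, and that $C_{kl}=(C_2\times C_2)\cap H_{kl}$, where $H_{kl}$ is the hyperplane $B(p_k,q)=B(p_l,q')$. Since $C_{ij}\subseteq C_2\times C_2$, I can write $C_{ij}\cap C_{kl}=C_{ij}\cap H_{kl}$, so the intersection of the two curves is really the intersection of the single curve $C_{ij}$ with a single hyperplane. By the definition of $\CC_1$, this intersection is finite. Applying the affine B\'ezout inequality exactly as in Lemma \ref{lem:curves}, now to the curve $C_{ij}$ (of degree at most $d^2$) and the hyperplane $H_{kl}$ (of degree $1$), I obtain $|C_{ij}\cap C_{kl}|=|C_{ij}\cap H_{kl}|\leq \deg(C_{ij})\cdot\deg(H_{kl})\leq d^2$.

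For the second statement I use the duality $(q_s,q_t)\in C_{ij}$ if and only if $(p_i,p_j)\in\widetilde{C}_{st}$. A curve $C_{ij}\in\CC$ contains both $(q_s,q_t)$ and $(q_u,q_v)$ precisely when the point $(p_i,p_j)$ lies in $\widetilde{C}_{st}\cap\widetilde{C}_{uv}$. Hence the number of such curves equals the number of pairs $(p_i,p_j)\in S^*\times S^*$ lying in this intersection, and since distinct pairs give distinct points of $\C^4$, it is at most $|\widetilde{C}_{st}\cap\widetilde{C}_{uv}|$. Because $(q_s,q_t)\in\CP_1$, the definition of $\CP_1$ guarantees this intersection is finite, and applying the same reduction as before (now on the surface $C_1\times C_1$, cutting $\widetilde{C}_{st}$ by the single hyperplane $\widetilde{C}_{uv}$ carves out), I bound it by $d^2$.

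The main obstacle — really the only nontrivial point — is recognizing that $C_{ij}$ and $C_{kl}$ should \emph{not} be treated as two generic curves of degree $d^2$, but that their intersection reduces to cutting one of them by a single hyperplane, because both lie on the common surface $C_2\times C_2$. Once this reduction is in place, finiteness is immediate from the definitions of $\CC_1$ and $\CP_1$, and the degree estimate is just the affine B\'ezout inequality already invoked for Lemma \ref{lem:curves}; the duality and the counting in the second part are routine.
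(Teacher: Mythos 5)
Your proof is correct and follows essentially the same route as the paper: both arguments exploit that $C_{ij}$ and $C_{kl}$ are hyperplane sections of the common surface $C_2\times C_2$, so the intersection is a linear section (the paper writes the bound as $\deg(C_2)^2\cdot\deg(H_{ij})\cdot\deg(H_{kl})=d^2$, you write it as $\deg(C_{ij})\cdot\deg(H_{kl})\leq d^2$ --- the same set and the same Bézout bookkeeping). The duality argument for the second part also matches the paper's.
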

\begin{proof}
As just observed, we can assume that $i\neq j$ and $k\neq l$.
The points $(q,q') \in C_{ij}\cap C_{kl}$ 
are on the intersection of the surface $C_2\times C_2$ with the hyperplanes $H_{ij}:B(p_i,q) = B(p_j,q')$ and $H_{kl}:B(p_k,q) = B(p_l,q')$.
Since, by definition of $\CC_1$, 
$|C_{ij}\cap C_{kl}|$ is finite,
applying B\'ezout's inequality as in Lemma \ref{lem:curves} shows that this intersection contains at most $\deg(C_2)^2\cdot\deg(H_{ij})\cdot \deg(H_{kl}) = d^2$ points.

The same argument gives
$|\widetilde{C}_{st}\cap\widetilde{C}_{uv}|\leq d^2$ for all $s,t,u,v$
with $(q_s,q_t)\neq(q_u,q_v)\in \CP_1$.
This is the dual statement to $(q_s,q_t)$ and $(q_u,q_v)$ lying in at most $d^2$ curves from $\CC$.
\end{proof}

Note that applying B\'ezout's inequality directly to these curves of degree at most $d^2$ gives $|C_{ij}\cap C_{kl}|\leq d^4$, \
which would lead to a worse degree dependence in our final bound.

Next we show that $\CP_0$ and $\CC_0$ are relatively small.
We do this by showing that if two curves have infinite intersection,
then this is related to an automorphism of $C_2$,
and by assumption $C_2$ does not have many automorphisms.

For a linear transformation $T:\C^2\to \C^2$,
we define its \emph{graph on $C_2$} by
$$G_T = \{(q,q')\in\C^4: q,q'\in C_2,~T(q)=q'\}.$$
It is the intersection of the surface $C_2\times C_2$ with the graph of $T$, which is a plane. 
Typically, these two surfaces in $\C^4$ would have finite intersection, but this is not always the case. When the intersection is infinite, this means that $T$ is an automorphism of $C_2$.

\begin{lemma}\label{lem:infiniteintersection}
For any distinct $C_{ij},C_{kl}\in\CC$, there is a linear transformation $T$ such that
$$C_{ij}\cap C_{kl} = G_T.$$
If $|C_{ij}\cap C_{kl}|=\infty$, then   $T$ is an automorphism of $C_2$,
and we have $i\neq k$ and $j\neq l$.

The same statements hold for the dual curves $\widetilde{C}_{st}$
corresponding to points $(q_s,q_t)\in \CP$.
\end{lemma}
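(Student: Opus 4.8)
The plan is to describe the intersection $C_{ij}\cap C_{kl}$ as the solution set of a linear system on $C_2\times C_2$ and to read off $T$ from it. A pair $(q,q')$ lies in $C_{ij}\cap C_{kl}$ exactly when $q,q'\in C_2$ and the two scalar equations $p_i^Tq=p_j^Tq'$ and $p_k^Tq=p_l^Tq'$ both hold. Stacking these and using the matrices of Definition \ref{def:matrixN}, I would rewrite them compactly as $N_{ik}\,q=N_{jl}\,q'$. By Lemma \ref{lem:Sstar}, two distinct points of $S^*$ lie on distinct lines through the origin, so $N_{jl}$ is invertible precisely when $j\neq l$. In that case I set $T:=N_{jl}^{-1}N_{ik}$, turning $N_{ik}q=N_{jl}q'$ into $q'=Tq$, which gives at once $C_{ij}\cap C_{kl}=\{(q,Tq):q,Tq\in C_2\}=G_T$.

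Next I would show that an infinite intersection forces the nondegenerate case $i\neq k$ and $j\neq l$, so that the construction above applies and moreover $T=N_{jl}^{-1}N_{ik}$ is invertible. If $j=l$, then the two equations share the coefficient vector $p_j=p_l$ on the right, so subtracting them yields $(p_i-p_k)^Tq=0$; as the curves are distinct this forces $p_i\neq p_k$, confining $q$ to a single line through the origin, which meets the non-line curve $C_2$ in at most $d$ points by B\'ezout. For each such $q$ the partner $q'$ lies in $C_2\cap\{p_j^Tq'=p_i^Tq\}$, again a finite set, so the intersection is finite. The symmetric computation shows $i=k$ forces $(p_j-p_l)^Tq'=0$ and finiteness as well. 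Hence $|C_{ij}\cap C_{kl}|=\infty$ implies $i\neq k$ and $j\neq l$, and then $T$ is a product of two invertible matrices.

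To finish, I would promote this invertible $T$ to an automorphism. Since $G_T=C_{ij}\cap C_{kl}$ is infinite and is a graph over $q$, there are infinitely many distinct $q\in C_2$ with $Tq\in C_2$; as $T$ is invertible these yield infinitely many distinct points of $C_2\cap T(C_2)$. Because $T(C_2)$ is again an irreducible curve of the same degree as $C_2$, B\'ezout forces $T(C_2)=C_2$, i.e.\ $T$ is an automorphism of $C_2$. The dual statement for the curves $\widetilde{C}_{st}$ is identical after exchanging the roles of $C_1$ and $C_2$, and of $S^*$ and $T^*$, and solving the corresponding system for $p'$ in terms of $p$.

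The step I expect to be the main obstacle is the \emph{direction} of the graph. The clean passage to $q'=Tq$ needs $N_{jl}$ invertible, i.e.\ $j\neq l$; in the degenerate case $j=l,\ i\neq k$ the system instead expresses $q$ through a rank-one map of $q'$, and the intersection is generally not a graph over $q$ at all. The argument above is organised precisely so that these degenerate cases are the ones shown to be finite, so that the graph description and the automorphism conclusion are invoked only in the infinite---hence nondegenerate---case. Getting the finiteness bookkeeping in the degenerate cases fully airtight, rather than the formal nondegenerate computation, is where I would concentrate the care.
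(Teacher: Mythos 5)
Your proposal is correct and follows essentially the same route as the paper: rewrite the two defining equations as $N_{ik}q=N_{jl}q'$, invert $N_{jl}$ (via Lemma \ref{lem:Sstar}) to get the graph description, and use B\'ezout on $C_2\cap T(C_2)$ to promote $T$ to an automorphism. The only difference is in the degenerate cases $i=k$ or $j=l$: the paper argues from the singularity of $N_{ii}$ and its one-dimensional image, while you subtract the two equations to confine $q$ (or $q'$) to a line through the origin and conclude finiteness directly by B\'ezout --- a minor, equally valid variation.
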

\begin{proof}
If $(q,q')\in C_{ij}\cap C_{kl}$ then we have
\begin{align*}
 B(p_i,q) & = B(p_j,q'),\\
 B(p_k,q) & = B(p_l,q'),
\end{align*}
which we can rewrite as $N_{ik}q = N_{jl}q'$
with the matrices $N_{ik},N_{jl}$ from Definition \ref{def:matrixN}.
We have either $i\neq k$ or $j\neq l$; without loss of generality we assume $j\neq l$, so that $N_{jl}$ is invertible by Lemma \ref{lem:Sstar}.
We define a linear transformation $T$ by
$$q' = T(q) = N_{jl}^{-1}N_{ik} q.$$
It follows that $C_{ij}\cap C_{kl}\subset G_T$.
On the other hand, if $(q,q')\in G_T$,
then $q,q' \in C_2$ and $q' = T(q) =N_{jl}^{-1}N_{ik} q$,
so $N_{jl} q'=N_{ik}q$.
This exactly means that $(q,q')\in C_{ij}\cap C_{kl}$,
so in fact we have $C_{ij}\cap C_{kl}= G_T$.
This proves the first statement of the lemma.

 If $|C_{ij}\cap C_{kl}|=\infty$, then $|T(C_2)\cap C_2| =\infty$.
 Since $C_2$ and $T(C_2)$ are irreducible algebraic curves, B\'ezout's inequality implies that $T(C_2) = C_2$,
 i.e., $T$ is an automorphism of $C_2$.

Suppose $i=k$.
If there are infinitely many points $(q,q')\in C_{ij}\cap C_{il}$, 
then they satisfy $N_{ii}q = N_{jl}q'$.
Since $N_{ii}$ is singular and its image is the line $y=x$, the same must be true for $N_{jl}$,
which implies that $j=l$.
Similarly, if $j=l$ and $|C_{ij}\cap C_{kj}|=\infty$, we get $i=k$.

The same arguments give the corresponding statements for the dual curves.
 \end{proof}


\subsection{Incidence bound}

To get an upper bound for the incidences between $\CP_1$ and $\CC_1$,
we use the following theorem, 
which we deduce from a theorem proved by Solymosi and De Zeeuw in \cite{SZ14}.

\begin{theorem}\label{thm:incidencebound}
Let $A,B\subset \C^2$ with $|A|=|B|=\mu$, let $\Pi \subset A\times B$, and let $\Gamma$ be a set of algebraic curves in $\C^4$ of degree at most $\delta$,
with $|\Gamma|=\mu^2$.
If any two points of $\Pi$ are contained in at most $\Delta$ curves of $\Gamma$, then we have
 $$I(\Pi,\Gamma) =O\left(\delta^{4/3}\Delta^{1/3}\mu^{8/3}\right).$$
\end{theorem}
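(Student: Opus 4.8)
The plan is to deduce Theorem~\ref{thm:incidencebound} from the incidence theorem of Solymosi and De Zeeuw~\cite{SZ14}, which bounds point-curve incidences in $\C^2$ for families of algebraic curves with bounded degree and a bounded number of curves through any two points. The obstacle is that our points $\Pi$ live in $A\times B\subset \C^4$ and our curves $\Gamma$ are curves in $\C^4$, whereas the cited theorem is about points and curves in the plane $\C^2$. The whole proof therefore hinges on a projection (or parametrization) argument that transfers the $\C^4$ incidence problem to a genuine planar one without inflating the degree or the two-points-through-a-curve multiplicity by more than a constant factor.

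First I would recall the precise form of the Solymosi--De Zeeuw bound: for $\mu^2$ points and $\mu^2$ algebraic curves of degree at most $\delta$ in $\C^2$, with at most $\Delta$ curves through any pair of points (and no curve passing through more than a bounded number of the points, or some analogous non-degeneracy condition), the number of incidences is $O(\delta^{4/3}\Delta^{1/3}\mu^{8/3})$. Matching the exponents $\mu^{8/3}$, $\delta^{4/3}$, $\Delta^{1/3}$ in the target bound strongly suggests that the cited theorem is being applied with $\mu^2$ points and $\mu^2$ curves, so the reduction should preserve these cardinalities (up to constants) and preserve the incidence count exactly.

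The key steps, in order, would be: (i) exhibit a map $\pi:\C^4\to\C^2$, linear or low-degree, under which the point set $\Pi\subset A\times B$ maps injectively (or boundedly-to-one) to a point set $\pi(\Pi)\subset\C^2$ of size $\Theta(\mu^2)$; (ii) push each curve $C\in\Gamma$ forward to a planar curve $\pi(C)$ (or its Zariski closure), verifying via elimination/resultants that $\pi(C)$ is an algebraic curve of degree $O(\delta)$, so the degree bound is preserved up to a constant; (iii) check that incidences are preserved, i.e. $p\in C \iff \pi(p)\in\pi(C)$ for $p\in\Pi$, which is where injectivity of $\pi$ on $\Pi$ is essential, so that $I(\Pi,\Gamma)=I(\pi(\Pi),\pi(\Gamma))$; (iv) verify the two-point multiplicity condition transfers: if two points of $\pi(\Pi)$ lie on a common pushed-forward curve, the corresponding two points of $\Pi$ lie on the original curve, so the bound $\Delta$ is preserved. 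Then the planar incidence bound applies directly with parameters $\mu$, $\delta'=O(\delta)$, $\Delta$, yielding the claimed estimate.

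The main difficulty will be step (ii) together with the injectivity in step (i): a single generic linear projection $\C^4\to\C^2$ need not be injective on $\Pi$, and a naive projection can collapse distinct curves or lower their degree in a way that destroys the multiplicity hypothesis. The honest approach is likely to choose $\pi$ as a generic linear map and argue, using that $|\Pi|$ and $|\Gamma|$ are finite, that for all but finitely many choices of $\pi$ the map is injective on $\Pi$ and the degrees and intersection patterns are faithfully recorded; alternatively, one exploits the product structure $A\times B$ directly, identifying $\C^2\times\C^2$ with $\C^2$ via a suitable chart adapted to the curves $C_{ij}$. I expect the cleanest route is to show that the curves in $\Gamma$, being defined by the vanishing of two \emph{bilinear} relations restricted to a product surface (as in Lemma~\ref{lem:curves}), admit a uniform rational parametrization that lands them in the plane, so that the cited theorem of~\cite{SZ14} can be invoked verbatim after this reduction.
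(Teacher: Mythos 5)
Your overall strategy is exactly the paper's: compose a projection $\C^4\to\C^2$ with a generic linear map, check that it is injective on $\Pi$, preserves incidences, keeps image curves distinct and of bounded degree, and then invoke the theorem of \cite{SZ14}. Steps (i)--(iv) and the ``bad maps form a lower-dimensional set'' genericity argument are precisely what the paper does (deferring the details to \cite[Corollary 2.5]{PZ14}).

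There is, however, one concrete point your proposal misses, and it is the one structural feature that makes the reduction work. The theorem of \cite{SZ14} is an incidence bound for curves on \emph{Cartesian products}: the planar point set must have the form $A'\times B'$ with $A',B'\subset\C$. Your recalled statement of that theorem omits this hypothesis, and your primary route --- a single fully generic linear map $\C^4\to\C^2$ --- would in general destroy the product structure of $\Pi\subset A\times B$, so the cited theorem could not be applied to $\pi(\Pi)$. This is why the paper does not take a generic element of all of $\mathrm{GL}_4$ followed by a projection, but instead fixes $\psi(z_1,z_2,z_3,z_4)=(z_1,z_3)$ and precomposes with a \emph{block-diagonal} map
$$\varphi=\begin{pmatrix}a&b&0&0\\c&d&0&0\\0&0&a'&b'\\0&0&c'&d'\end{pmatrix},$$
generic only within this $8$-dimensional family: the block structure guarantees $\pi(\Pi)=A'\times B'$ for some $A',B'\subset\C$, while genericity within the family still suffices to make $\pi$ injective on $\Pi$, incidence-preserving, and non-collapsing on $\Gamma$. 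You gesture at this when you mention ``exploiting the product structure $A\times B$ directly,'' but you present it as an optional alternative rather than as a requirement forced by the hypotheses of the planar theorem; with that correction (and dropping the closing speculation about a uniform rational parametrization, which is not needed), your argument coincides with the paper's.
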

\begin{proof}
Theorem 1 and Remark 15 from \cite{SZ14} give this statement for curves in $\C^2$.
We can reduce to that case using a generic projection argument, for instance as worked out in detail in \cite{PZ14}. 
We will only sketch how that argument can be adapted to this situation.

Let $\psi:\C^4\to\C^2$ be the projection $(z_1,z_2,z_3,z_4)\mapsto (z_1,z_3)$.
We claim that that there is a linear transformation $\varphi:\C^4\to \C^4$ 
with a matrix of the form
$$\begin{pmatrix}a&b&0&0\\c&d&0&0\\0&0&a'&b'\\0&0&c'&d'\end{pmatrix},$$
so that $\pi:=\psi\circ \varphi$ has the following properties: $\pi$ is bijective on $\Pi$; $\pi$ induces a bijection between $I(\Pi,\Gamma)$ and $I(\pi(\Pi), \pi(\Gamma))$; for $\gamma,\gamma'\in \Gamma$, $\pi(\gamma)$ and $\pi(\gamma')$ are distinct algebraic curves in $\C^2$.
Because of the form of the matrix, 
we can write $\pi(\Pi) = A'\times B'$ with two sets $A',B'\subset \C$.
The linear map does not increase the degree of the curves.
Applying the main theorem of \cite{SZ14} gives the desired bound.

The claim is proved exactly as in \cite[Corollary 2.5]{PZ14}, by showing that the set of $\varphi$ for which one of these properties fails is a lower-dimensional subset of the $8$-dimensional space of such matrices.
\end{proof}

By Lemma \ref{lem:intersections},
$\CP_1$ and $\CC_1$ almost exactly satisfy the conditions of Theorem \ref{thm:incidencebound} with $A =B= T^*$, $\mu = m$, $\delta = d^2$, and $\Delta=d^2$; only the condition $|\CC_1|=m^2$ need not quite hold, but it is easily forced by adding in dummy curves or points, without adding incidences.
Thus we get the following bound.

\begin{lemma}
We have the incidence bound
$$I(\CP_1,\CC_1)=O\left( d^{10/3}m^{8/3}\right).$$
\end{lemma}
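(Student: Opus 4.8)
The plan is to apply Theorem \ref{thm:incidencebound} directly to the point set $\CP_1$ and the curve set $\CC_1$, with the parameter identifications already flagged in the text preceding the statement: take $A=B=T^*$ so that $\mu=m$, take $\delta=d^2$ (the degree bound from Lemma \ref{lem:curves}), and take $\Delta=d^2$ (the bound on the number of curves through two points from Lemma \ref{lem:intersections}). Since trivially $I(\CP_1,\CC_1)\leq I(\CP,\CC)=|\CQ|$, and more to the point $I(\CP_1,\CC_1)$ counts exactly those incidences involving good points and good curves, the incidence theorem should yield the stated bound once its hypotheses are verified.

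First I would check the hypotheses of Theorem \ref{thm:incidencebound} one by one. The set $\CP_1\subset T^*\times T^*$ is of the required form $\Pi\subset A\times B$ with $A=B=T^*$, so $\mu=|T^*|=m$. The curves in $\CC_1$ are algebraic curves in $\C^4$ of degree at most $d^2$ by Lemma \ref{lem:curves}, giving $\delta=d^2$. The crucial combinatorial input is the bound $\Delta=d^2$: by Lemma \ref{lem:intersections}, any two distinct points of $\CP_1$ lie on at most $d^2$ curves of $\CC$, hence a fortiori of $\CC_1$. The one genuine mismatch is the cardinality requirement $|\Gamma|=\mu^2$, i.e. $|\CC_1|=m^2$, which need not hold exactly. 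As noted in the text, this is harmless: one pads $\CC_1$ with dummy curves (or the point set with dummy points) up to the required count without creating any new incidences, so the bound on $I(\CP_1,\CC_1)$ is unaffected.

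With the substitutions $\delta=\Delta=d^2$ and $\mu=m$, the conclusion of Theorem \ref{thm:incidencebound} reads
$$I(\CP_1,\CC_1)=O\left(\delta^{4/3}\Delta^{1/3}\mu^{8/3}\right)=O\left((d^2)^{4/3}(d^2)^{1/3}m^{8/3}\right)=O\left(d^{10/3}m^{8/3}\right),$$
which is exactly the claimed bound. The only arithmetic to record is $2\cdot(4/3)+2\cdot(1/3)=10/3$ in the exponent of $d$.

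I do not expect a serious obstacle here, since all the technical work has been front-loaded into Lemma \ref{lem:curves}, Lemma \ref{lem:intersections}, and Theorem \ref{thm:incidencebound}. The only point requiring a sentence of care is the $|\Gamma|=\mu^2$ normalization in the incidence theorem: one must confirm that the dummy-object padding genuinely adds no incidences, which is immediate if the dummy curves are chosen to avoid $\CP_1$ (or the dummy points to avoid $\CC_1$). Everything else is a direct substitution into the already-established incidence bound.
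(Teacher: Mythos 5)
Your proposal is correct and matches the paper's argument exactly: the paper also applies Theorem \ref{thm:incidencebound} with $A=B=T^*$, $\mu=m$, $\delta=d^2$, and $\Delta=d^2$ (the latter two from Lemma \ref{lem:curves} and Lemma \ref{lem:intersections}), handling the $|\Gamma|=\mu^2$ normalization by padding with dummy curves or points that contribute no incidences. The exponent arithmetic is also as you state, so there is nothing to add.
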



\subsection{Conclusion}
We show that the incidences coming from $\CP_0$ and $\CC_0$ are negligible.

 \begin{lemma}
 If each of $C_1,C_2$ has $O(d^2)$ automorphisms, then
  $$|\CC_0|=O(d^2m)~~~\text{and}~~~|\CP_0|=O(d^2m).$$
 \end{lemma}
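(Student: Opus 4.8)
The plan is to bound $|\CC_0|$ by counting the pairs of curves that can have infinite intersection, and then to invoke the automorphism hypothesis to show there are few such pairs. By Lemma \ref{lem:infiniteintersection}, whenever $C_{ij}$ and $C_{kl}$ have infinite intersection, the associated transformation $T=N_{jl}^{-1}N_{ik}$ is an automorphism of $C_2$. The key observation is that each automorphism $T$ can account for only a limited number of curves in $\CC_0$: I would fix an automorphism $T$ and count how many pairs $((p_i,p_j),(p_k,p_l))$ can produce it.

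First I would rewrite the condition $T=N_{jl}^{-1}N_{ik}$ as the matrix equation $N_{jl}T=N_{ik}$. Reading off the rows of this equation (recall that row $i$ of $N_{ik}$ is $p_i=(x_i,y_i)$), the top row gives $p_j T = p_i$ and the bottom row gives $p_l T = p_k$, i.e.\ $p_i=T^T p_j$ in the appropriate sense (one must be careful with the transpose convention, since $N$ stacks points as rows). The upshot is that, once $T$ is fixed, the point $p_i$ is determined by $p_j$ via a fixed linear map, and likewise $p_k$ is determined by $p_l$. Since all these points lie in $S^*$, each choice of $p_j\in S^*$ leaves at most one valid $p_i$, and each choice of $p_l$ at most one valid $p_k$; hence a single automorphism $T$ is realized by at most $m^2$ ordered pairs of index-pairs. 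That count is too weak on its own, so the real leverage must come from bounding the number of distinct automorphisms $T$ that arise.

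Here is where the hypothesis enters: by assumption $C_2$ has $O(d^2)$ automorphisms, so the transformation $T$ ranges over a set of size $O(d^2)$. The sharper count I want is: a curve $C_{ij}$ lies in $\CC_0$ precisely when there exist an automorphism $T$ of $C_2$ and indices $k,l$ with $N_{jl}T=N_{ik}$. Fixing $C_{ij}$ (equivalently the pair $(p_i,p_j)$) and ranging over the $O(d^2)$ automorphisms $T$, each $T$ forces $p_k=T^T p_l$-type relations that, combined with membership in $S^*$, determine the partner pair $(p_k,p_l)$ up to finitely many choices. But to bound $|\CC_0|$ itself I would instead fix $T$ and $p_j$ (giving $p_i$), so that the number of $(p_i,p_j)$ with $C_{ij}\in\CC_0$ is at most (number of automorphisms) $\times$ (number of choices of $p_j$) $=O(d^2)\cdot m=O(d^2 m)$, which is exactly the claimed bound. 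The dual argument using the $O(d^2)$ automorphisms of $C_1$ gives $|\CP_0|=O(d^2 m)$.

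The main obstacle I anticipate is handling the transpose/row-stacking bookkeeping in $N_{jl}T=N_{ik}$ correctly, and in particular verifying that, for a fixed automorphism $T$, the map sending $p_j$ to the forced value of $p_i$ is a genuine linear bijection so that at most one $p_i\in S^*$ survives for each $p_j$. I would also need to confirm that an automorphism $T$ arising from the pair $((p_i,p_j),(p_k,p_l))$ is counted against $C_{ij}$ in a way that does not overcount: different $(k,l)$ realizing the same $T$ for a given $(i,j)$ still contribute only the single curve $C_{ij}$ to $\CC_0$, so the bound $|\CC_0|\le (\#\text{automorphisms})\cdot m$ is not degraded. Once the linear-algebra identity is pinned down, the automorphism hypothesis does the rest and the estimate follows cleanly.
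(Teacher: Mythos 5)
Your proof is correct and follows essentially the same route as the paper: both arguments attach to each curve of $\CC_0$ one of the $O(d^2)$ automorphisms $T$ of $C_2$ via Lemma \ref{lem:infiniteintersection}, and show that each such $T$ accounts for at most $m$ curves, the dual argument with automorphisms of $C_1$ handling $\CP_0$. The only difference is presentational: you read the relation $p_i=T^Tp_j$ directly off the identity $N_{jl}T=N_{ik}$ and count the curves themselves, whereas the paper packages the same ``at most one $j$ per $i$ for each $T$'' fact as an edge-colouring of a graph on $\CC_0$.
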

 \begin{proof}
 We define a graph with vertices $C_{ij}\in \CC_0$ and
 an edge between $C_{ij}$ and $C_{kl}$ if and only if $|C_{ij}\cap C_{kl}| = \infty$.
 We color an edge $C_{ij}C_{kl}$ with the transformation $T$ if $C_{ij}\cap C_{kl} = G_T$;
 by Lemma \ref{lem:infiniteintersection}, there is such a $T$ for every edge.
 
 If two edges of the form $C_{ij}C_{kl}$ and $C_{ij'}C_{k'l'}$ have the same color $T$,
 then $C_{ij}\cap C_{kl} = G_T = C_{ij'}\cap C_{k'l'}$.
 Then $G_T\subset C_{ij}\cap C_{ij'}$, so  $|C_{ij}\cap C_{ij'}| = \infty$,
 contradicting Lemma \ref{lem:infiniteintersection}.\footnote{In fact, the edges of the same color form a clique, 
but we do not need this fact.}
 
 It follows that every color $T$ occurs at most $m$ times, 
 since for each $i$ there is at most one $j$ 
 such that $C_{ij}$ is incident with an edge of color $T$.
 By assumption, $C$ has $O(d^2)$ automorphisms, so there are at most $O(d^2)$ colors,
 hence the graph has $O(d^2m)$ edges.
 By definition of $\CC_0$ there are no isolated vertices,
 so the number of vertices is at most twice the number of edges, 
 hence $|\CC_0|=O(d^2m)$.
  
 A similar argument applied to the dual curves gives the bound on $|\CP_0|$.
 \end{proof}

\begin{lemma}
 If each of $C_1,C_2$ has $O(d^2)$ automorphisms, then
 $$I(\CP,\CC_0) =O(d^3m^2)\hspace{20pt}\text{and}\hspace{20pt}
   I(\CP_0,\CC) =O(d^3m^2).$$
\end{lemma}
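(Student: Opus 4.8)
The plan is to bound each incidence count as a product of two factors: the number of objects in the ``small'' set (either $\CC_0$ or $\CP_0$) and the maximum number of incidences a single object can contribute. The immediately preceding lemma supplies $|\CC_0|=O(d^2m)$ and $|\CP_0|=O(d^2m)$, so it suffices to prove the two uniform bounds: every curve in $\CC$ is incident to at most $O(dm)$ points of $\CP$, and dually every point of $\CP$ lies on at most $O(dm)$ curves of $\CC$. Multiplying then gives $O(d^2m)\cdot O(dm)=O(d^3m^2)$ in both cases. Note that it is harmless (and convenient) to prove these per-object bounds for all of $\CC$ and all of $\CP$, not merely the bad subsets.

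For the first uniform bound I would fix a curve $C_{ij}\in\CC$ and count the points $(q_s,q_t)\in T^*\times T^*=\CP$ lying on it. By definition $(q_s,q_t)\in C_{ij}$ means $B(p_i,q_s)=B(p_j,q_t)$. Since Lemma \ref{lem:Sstar} guarantees that the matrices $N_{ik}$ are nonsingular, no point of $S^*$ or $T^*$ is the origin; in particular $p_j\neq 0$, so for each fixed $q_s$ the equation $B(p_j,q_t)=B(p_i,q_s)$ defines a genuine line in the $q_t$-plane. Because $C_2$ is irreducible and not a line, B\'ezout's inequality bounds the intersection of this line with $C_2$ by $d$, giving at most $d$ admissible $q_t\in T^*\subset C_2$. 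Ranging over the $m$ choices of $q_s$ shows $C_{ij}$ carries at most $dm$ points of $\CP$, and summing over $\CC_0$ yields $I(\CP,\CC_0)=O(d^3m^2)$.

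The second bound is entirely dual. Fixing a point $(q_s,q_t)\in\CP_0$, I would count the curves $C_{ij}\in\CC$ through it; by the symmetry observed before Lemma \ref{lem:curves}, this is the same as counting pairs $(p_i,p_j)\in S^*\times S^*$ with $B(p_i,q_s)=B(p_j,q_t)$. For each of the $m$ choices of $p_i$ the quantity $B(p_i,q_s)$ is fixed, and since $q_t\neq 0$ the condition $B(p_j,q_t)=B(p_i,q_s)$ cuts out a line in the $p_j$-plane, meeting the non-line curve $C_1$ in at most $d$ points by B\'ezout. Hence there are at most $dm$ such pairs, so each point of $\CP_0$ lies on at most $dm$ curves of $\CC$, and $I(\CP_0,\CC)=O(d^3m^2)$. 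The argument is routine; the only point requiring care is the verification that the relevant linear constraint genuinely defines a line (which is where the nonvanishing of $p_j$, resp.\ $q_t$, guaranteed by Lemma \ref{lem:Sstar}, enters) so that B\'ezout's inequality applies with the curve $C_2$, resp.\ $C_1$, not being a line.
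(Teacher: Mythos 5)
Your proof is correct and follows essentially the same route as the paper: bound the incidences of each single curve (resp.\ point) by $dm$ via B\'ezout applied to $C_2$ (resp.\ $C_1$) and the line cut out by the bilinear condition, then multiply by the $O(d^2m)$ bound on $|\CC_0|$ (resp.\ $|\CP_0|$) from the preceding lemma. Your added observation that $p_j\neq 0$ (resp.\ $q_t\neq 0$), which follows from Lemma \ref{lem:Sstar} and guarantees that the constraint really defines a line, is a small point the paper leaves implicit.
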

\begin{proof}
Any $C_{ij}$ has at most $dm$ incidences with points $(q_s,q_t)\in \CP$.
This is because for any of the $m$ choices for $q_s$,
the corresponding $q_t$ must be an intersection point of $C_2$ with the line
$\{q \in\C^2: B(p_j, q) = B(p_i,q_s)\}$.
Since we assumed that $C_2$ is not a line, by B\'ezout's inequality there are at most $d$ such intersection points.

Since $|\CC_0|=O(d^2m)$, this gives $I(\CP,\CC_0) =O(d^3m^2)$.
The dual argument gives the second bound.
\end{proof}

We get the overall incidence bound 
$$I(\CP,\CC) \leq I(\CP_0, \CC) + I(\CP,\CC_0) +I(\CP_1,\CC_1)
 =O\left(d^{10/3}m^{8/3}\right).$$
Combining this with $I(\CP,\CC) = |\CQ| \ge m^4/|\CB|$ from Lemma \ref{lem:lowerbound} and $m\geq n/d$ gives
$$|\CB(S,T)| = \Omega\left(|\CB|\right) 
= \Omega\left(m^4/|\CQ|\right) 
=\Omega\left(d^{-10/3} m^{4/3}\right)
=\Omega\left(d^{-14/3} n^{4/3}\right),$$
which completes the proof of Theorem \ref{thm:main2}.

 
\newpage
\section{Linear automorphisms}
\label{sec:autos} 
 
 In this section we study algebraic curves that have infinitely many linear automorphisms.
Although the topic seems classical, we were not able to find in the literature the exact statement that we need,
 so we provide our own proof.

 Recall that by a \emph{(linear) automorphism} of a curve $C$ we mean an invertible linear transformation $T:\C^2\to\C^2$ such that $T(C) = C$. 
 Note that in algebraic geometry, ``automorphism'' often denotes a \emph{polynomial} transformation (or ``morphism'') that fixes the curve,
 or sometimes a \emph{projective} transformation that fixes the curve.
 The classic theorem about polynomial automorphisms is Hurwitz's Theorem,
 which states that a nonsingular curve of genus $g\geq 2$ has at most $84(g-1)$ polynomial automorphisms 
 (see for instance \cite{Ha77}, Exercise IV.2.5). 
If $C$ has degree $d$, then we have $g\leq d^2$, so we get a bound in terms of the degree $d$.
However, this does not give the exact picture for linear automorphisms.
 For nonsingular curves, it would reduce the question to conics, 
 for which one can easily compute what the linear automorphisms are.
 However, there are many higher-degree singular curves of genus 0,
 for which it is harder to determine the linear automorphisms.
 This is what we do directly with an elementary approach,
 sidestepping Hurwitz's Theorem (and its difficult proof) altogether.

The theorem we prove in this section is the following.
Together with Corollary \ref{cor:main2}, it implies Theorem \ref{thm:main}.
Special curves are defined in Definition \ref{def:special}.

 \begin{theorem}
 \label{thm:autos}
  An irreducible algebraic curve of degree $d$ has $O(d^2)$ linear automorphisms,
  unless it is a special curve.
 \end{theorem}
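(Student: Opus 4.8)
The plan is to classify irreducible curves with infinitely many linear automorphisms, showing they must be special, and simultaneously give the $O(d^2)$ bound for all other curves. The first observation is that the group $\mathrm{Aut}(C)$ of linear automorphisms is an algebraic subgroup of $GL_2(\C)$: it is cut out inside $GL_2$ by the condition $T(C)=C$, which is a Zariski-closed condition. Thus $\mathrm{Aut}(C)$ is a linear algebraic group, and the dichotomy I want is between the finite case (which I will bound by $O(d^2)$) and the positive-dimensional case (which I claim forces $C$ to be special). So the real content is: \emph{if $\mathrm{Aut}(C)$ is infinite, then $C$ is special}, together with an explicit count in the finite case.

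For the finite case, I would argue as follows. Pick any nonsingular point $p_0\in C$ in ``general position'' (away from the finitely many singular points and away from a few bad lines through the origin). Any automorphism $T$ sends $p_0$ to another point of $C$, and the image $T(p_0)$ together with the requirement that $T$ respect the tangent data at $p_0$ pins $T$ down up to finitely many choices. More concretely, an element of $GL_2$ is determined by where it sends two points; so I would fix two general points $p_0,p_1\in C$ and note that $T$ must send each to a point of $C$, giving at most $|C\cap(\text{line})|$-type bounds. The cleanest route: $T$ is determined by the images of a basis, and since $T(C)=C$ the images of chosen points lie on $C$; counting the at most $d$ possible images at each of two independent points and using that the map $T\mapsto(T(p_0),T(p_1))$ is injective yields $|\mathrm{Aut}(C)|=O(d^2)$. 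The main work is to ensure the chosen points are independent and that the finitely-many-choices step is rigorous, which is where B\'ezout ($|C\cap C'|\le d\cdot\deg C'$) enters.

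For the infinite case, the heart of the argument, I would use the structure of connected $1$-dimensional algebraic subgroups of $GL_2(\C)$. Up to conjugation (i.e.\ up to linear equivalence, which is exactly the relation in Definition \ref{def:special}), the connected $1$-dimensional subgroups of $GL_2$ are essentially: a torus $\{\mathrm{diag}(t,t^\lambda)\}$ or a unipotent/additive group. If $\mathrm{Aut}(C)^\circ$ contains a diagonal one-parameter torus $t\mapsto\mathrm{diag}(t^a,t^b)$, then $C$ is invariant under all such scalings, and writing out that a defining polynomial $f$ must be a semi-invariant (an eigenvector) for this torus action forces every monomial $x^iy^j$ appearing in $f$ to lie on a single line $ai+bj=\text{const}$; irreducibility then collapses $f$ to a binomial $x^k-cy^\ell$ with $\gcd(k,\ell)=1$, i.e.\ a special curve after the linear change making the torus diagonal. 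I expect the torus case to be the generic and cleanest one. The genuine obstacle is the \textbf{unipotent (additive) case}: one must show that if $C$ is invariant under a one-parameter group of shears $\begin{pmatrix}1&t\\0&1\end{pmatrix}$, then either $C$ is a line (excluded) or one reaches a contradiction with irreducibility and finite degree — intuitively because a nonlinear curve invariant under all vertical shears would have to be a union of vertical lines. Handling this cleanly, and ruling out mixed cases where $\mathrm{Aut}(C)^\circ$ could a priori be larger, is the step I would allocate the most care to; the semi-invariant/monomial-support computation in the torus case, by contrast, should be routine once set up.
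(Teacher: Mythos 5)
Your reduction of the infinite case to one-parameter subgroups is sound and genuinely different from the paper: $\mathrm{Aut}(C)$ is indeed Zariski-closed in $GL_2(\C)$, a positive-dimensional component contains a copy of $\mathbb{G}_m$ or $\mathbb{G}_a$, the semi-invariance computation in the torus case collapses $f$ to a binomial $x^k-cy^\ell$ or $x^ky^\ell-c$ with coprime exponents (a special curve after the diagonalizing change of coordinates), and the unipotent case is even easier than you fear: the orbit of any point off the fixed line of the shear group is an entire affine line, so irreducibility forces $C$ to be a line. The paper instead argues by hand (many automorphisms force a root-of-unity scaling $T_\lambda$ to be an automorphism, which forces $f$ to have monomials in only two degrees), so your route to the classification of curves with infinitely many automorphisms is cleaner and more conceptual.

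The genuine gap is the finite case, and it is not a technicality. You assert that $T$ is determined by $(T(p_0),T(p_1))$ for two independent points and that there are ``at most $d$ possible images at each,'' but you never produce a curve or line of degree $O(1)$ that $T(p_0)$ is forced to lie on: a priori $T(p_0)$ can be any point of the infinite set $C$, and the orbit of a general point under a finite group $G$ of automorphisms has size $|G|$ --- exactly the quantity you are trying to bound --- so the count is circular. The tangent-data variant does not rescue this: prescribing $T(p_0)=q$ and that $T$ send the tangent direction at $p_0$ to the tangent direction at $q$ still leaves a free scalar, hence a two-parameter family of candidate maps as $q$ varies over $C$. The paper closes precisely this hole by first extracting from $f$ a finite set of at least three lines through the origin (asymptote directions, and tangent directions at the origin when $C$ passes through it) that every automorphism must permute; this yields an $O(d)$ bound on the induced M\"obius transformations at infinity via the classification of finite subgroups of $PGL_2(\C)$, and then B\'ezout on a fixed line gives at most $d$ choices for the image of one point, hence $O(d^2)$ in total (Lemma \ref{lem:threeatinfinity}). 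If you want to stay within your algebraic-group framework, the natural repair is a discrete analogue of your torus computation: bound the scalar automorphisms by $d$ using the difference of two monomial degrees of $f$, classify the image of $\mathrm{Aut}(C)$ in $PGL_2$ as cyclic, dihedral, or one of the three exceptional groups, and in the cyclic/dihedral case diagonalize a generator and bound its order by the index of the lattice spanned by differences of exponent vectors in the support of $f$ (which is $O(d^2)$ unless the support is collinear, i.e.\ unless $C$ is special). As it stands, the quantitative half of the theorem is unproved.
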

\begin{proof}
The theorem follows directly from Lemma \ref{lem:notorigin} and Lemma \ref{lem:origin} below.
Assume $C$ is not a special curve.
If $C$ does not contain the origin, it has $O(d^2)$ automorphisms by Lemma \ref{lem:notorigin}, and if it does contain the origin, it has $O(d^2)$ automorphisms by Lemma \ref{lem:origin}.
\end{proof}

 \begin{example}
Special curves have infinitely many automorphisms.
 For $x^k=y^\ell$, the matrix
 $$ \begin{pmatrix}\alpha^\ell&0\\0&\alpha^k \end{pmatrix}$$
defines an automorphism for all $\alpha\in \C\backslash\{0\}$.
It then clearly follows that a linearly equivalent curve has infinitely many automorphisms.
\end{example}

An initial idea for proving Theorem \ref{thm:autos} would be to observe the following about an automorphism $T$ of the curve $C$.
If $L$ is an eigenline of $T$ and $q\in C\cap L$, then $T^i(q)\in C\cap L$ for all $i$.
If the eigenvalue of $L$ is not a root of unity, then the points $T^i(q)$ would form an infinite set in $C\cap L$, so by B\'ezout's inequality, $C$ would have to equal $L$. 
However, this approach fails, because $C\cap L$ may be empty (and this is indeed what happens for special curves).
We therefore have to use a similar but trickier argument.
Over $\R$, the argument would be considerably simpler, as we would not have to worry about roots of unity.

Our proof of Theorem \ref{thm:autos} rests on the three lemmas below. The first two are complementary and together imply Theorem \ref{thm:autos}.
The third, more technical, lemma is used in the proofs of the first two lemmas to handle specific subcases.
We use some concepts from the theory of algebraic curves, for which we refer to \cite{Ha77}; namely the projective plane, singularities and their branches, and intersection multiplicity.

In these lemmas we let $C$ be an irreducible algebraic curve of degree $d$, and $f$ a minimum-degree polynomial with $C = Z(f)$.
We write $T_\lambda$ for the scaling transformation defined by $T_\lambda(p)=\lambda p$, with $\lambda\in \C\backslash\{0\}$.
We write $L_m$ for the line $y = mx$ with $m\in \C$.

\begin{lemma}\label{lem:notorigin}
Suppose $C$ is not a line and does not contain the origin.
Then $C$ has $O(d^2)$ automorphisms, unless it is linearly equivalent to $x^ky^\ell=1$, with $k,\ell\geq 1$.
\end{lemma}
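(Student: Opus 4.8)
The plan is to analyze the structure of the automorphism group of $C$ directly, exploiting the fact that $C$ avoids the origin. An automorphism $T$ fixes $C$ setwise, and since the origin is the unique fixed point of every invertible linear map, avoiding it frees me to study how automorphisms permute the eigenstructure. First I would put $T$ in a convenient normal form. Over $\C$ every invertible $2\times 2$ matrix is either diagonalizable with two eigenlines, or a single Jordan block. I would argue that the Jordan-block (non-diagonalizable) case and the scalar case can be ruled out or controlled separately, so that the interesting automorphisms are simultaneously diagonalizable in a common basis. After a linear change of coordinates (which preserves both the degree $d$ and the property of not passing through the origin, and only changes $C$ to a linearly equivalent curve), I may assume these diagonal automorphisms are exactly the maps $T_{(\alpha,\beta)}:(x,y)\mapsto(\alpha x,\beta y)$ that preserve $C$.

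Next I would encode the constraint. Writing $f=\sum c_{ab}x^ay^b$ for a minimum-degree defining polynomial, the condition $T_{(\alpha,\beta)}(C)=C$ forces $f(\alpha x,\beta y)=\kappa\, f(x,y)$ for some scalar $\kappa$ depending on $(\alpha,\beta)$, because $f$ is irreducible and hence (up to scalar) the unique defining polynomial of that degree. Comparing coefficients, each monomial present in $f$ must satisfy $\alpha^a\beta^b=\kappa$. The key observation is that if $f$ has at least \emph{three} monomials with distinct exponent vectors, the resulting system of equations $\alpha^{a}\beta^{b}=\alpha^{a'}\beta^{b'}$ pins $(\alpha,\beta)$ down to finitely many solutions, with the count bounded polynomially in $d$ via B\'ezout applied to the binomial equations $\alpha^{a-a'}\beta^{b-b'}=1$. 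The remaining danger is when $f$ has at most two monomials: an equation of the form $c_{ab}x^ay^b=c_{a'b'}x^{a'}y^{b'}$. Since $C$ does not contain the origin, $f$ must have a nonzero constant term, i.e. the monomial $x^0y^0$ is present; combined with the two-monomial restriction this forces $f$ to have the shape $x^ay^b = \text{const}$ after normalization, which is exactly the exceptional curve $x^ky^\ell=1$ once I reduce to coprime exponents.

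The hard part, as I see it, is twofold. The first difficulty is establishing that I may genuinely reduce to simultaneously diagonal automorphisms: a priori the automorphism group could contain maps with a shared eigenline but acting as a shear, or maps that are scalar multiples of the identity. A scalar $T_\lambda=\lambda I$ preserves $C$ only if $f(\lambda x,\lambda y)=\kappa f(x,y)$, which (since $f$ has a constant term because the origin is off $C$) forces $\lambda^{\deg\text{ of each monomial}}$ to be constant, and comparing with the degree-$0$ term $\lambda^0=1$ severely restricts $\lambda$ to roots of unity, giving only $O(d)$ such scalings. The second, more genuinely delicate, difficulty is counting: I must show the finitely many diagonal solutions number $O(d^2)$ rather than merely finitely many, and I must handle the interaction between the discrete root-of-unity ambiguity and the continuous scaling freedom carefully. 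This is presumably where Lemma~\ref{lem:scaling} (the third technical lemma mentioned in the excerpt, which I am permitted to invoke) does the bookkeeping, controlling how scalings $T_\lambda$ and lines $L_m$ interact with the branches of $C$.

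Finally I would assemble the pieces: if $C$ is not linearly equivalent to $x^ky^\ell=1$, then in the normalized coordinates its defining polynomial has at least three monomials (or fails the two-monomial structure in a way that still forces at least three after using the constant term), so the diagonalizable automorphisms number $O(d^2)$; the non-diagonalizable and scalar automorphisms contribute only $O(d)$ more; and hence the total is $O(d^2)$, as claimed. I would keep the degree dependence explicit throughout by always citing B\'ezout with the correct degrees, so that the bound comes out as $O(d^2)$ and not merely ``finite.''
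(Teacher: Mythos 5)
The central step of your plan---reducing to automorphisms that are simultaneously diagonal in one common basis---is precisely the point that needs proof, and you leave it as an assertion. Knowing that each individual automorphism is diagonalizable, unipotent, or scalar says nothing about the group: a priori $C$ could have many automorphisms with pairwise different eigenbases, and none of your subsequent bookkeeping applies to those. Making the reduction rigorous requires real structure theory: for an infinite automorphism group, that it is an algebraic subgroup of $GL_2(\C)$ whose identity component contains a one-parameter subgroup (the additive case $(x,y)\mapsto(x+ty,y)$ being excluded because $C$ is not a line, leaving a torus, which is diagonal in \emph{some} basis); for a large finite group, the classification of finite subgroups of $PGL_2(\C)$ together with your bound on scalar automorphisms, to extract a large simultaneously diagonalizable subgroup. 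None of this appears in your writeup. Note also that the auxiliary lemma you plan to invoke is not what you describe: the paper's third lemma, Lemma \ref{lem:threeatinfinity}, bounds the automorphisms that permute a fixed set of at least three lines through the origin (and it is there that the finite-subgroup classification lives); it says nothing about scalings interacting with branches. A smaller but genuine error: three monomials do \emph{not} pin $(\alpha,\beta)$ down to finitely many solutions. If the three exponent vectors are collinear (e.g.\ $1$, $xy$, $x^2y^2$), your binomial equations share a common one-dimensional component and the solutions form a positive-dimensional torus; you need three non-collinear exponent vectors, and the collinear case has to be collapsed to the two-monomial case using irreducibility of $f$ (this does work, but must be argued).

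For comparison, the paper sidesteps the group theory entirely at this stage. From $d^2+1$ automorphisms $A_1,\dots,A_{d^2+1}$ and a generic line $L_m$ through the origin meeting $C$ in $d$ points, it notes that if $q,\lambda q\in C\cap L_m$ then the points $A_iq,\lambda A_iq$ give $d^2+1$ common points of $C$ and $T_\lambda(C)$, so $T_\lambda(C)=C$ by B\'ezout; iterating $T_\lambda$ along $L_m$ forces $\lambda$ to be a root of unity of order at most $d$, and pigeonholing over the $d$ points of $C\cap L_m$ produces a scaling automorphism $T_\lambda$ with $\lambda$ a \emph{primitive} $d$-th root of unity. Hence $f(x,mx)=\alpha(x^d-q_x^d)$ for generic $m$, so $f$ has terms only in degrees $0$ and $d$, i.e.\ $f=\prod_{i=1}^d(y-a_ix)+c$; automorphisms permute the asymptote directions $L_{a_i}$, Lemma \ref{lem:threeatinfinity} gives the $O(d^2)$ bound when at least three of these are distinct, and otherwise $f=(y-b_1x)^k(y-b_2x)^\ell+c$ is linearly equivalent to $x^ky^\ell=1$. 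Your diagonal computation is essentially a correct analysis of the diagonal subgroup in a fixed basis, but without the reduction it does not prove the lemma.
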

\begin{proof}
Suppose $C$ has more than $d^2$ automorphisms, and choose matrices $A_1,\ldots,A_{d^2+1}$ from among them.\footnote{We really mean $d^2$; the $O(d^2)$ in the lemma comes from the second part of this proof}
We claim that for all but finitely many $m\in \C$, the line $L_m$ has the following two properties: $|C\cap L_m|=d$,  and the lines $A_iL_m$ are distinct.
The first property fails only for the finitely many $m$ such that $L_m$ is tangent to $C$, or intersects $C$ at infinity or in a singularity.
The second property fails only when for some pair $i,j$, $L_m$ is a line such that $(A_i-A_j)L_m=0$; if such a line exists, it is unique.

Choose $L_m$ with the two properties above.
Suppose $q,\lambda q\in C\cap L_m$ for some $\lambda\in \C\backslash\{0,1\}$.
Then the points $A_iq, \lambda A_iq$ are all on $C$, and they are all distinct by the second property of $L_m$.
Since $T_\lambda$ sends $A_iq$ to $\lambda A_iq$, 
the irreducible curves $C$ and $T_\lambda(C)$ have $d^2+1$ points in common, so by B\'ezout's inequality we have $T_\lambda(C) = C$.
Thus $T_\lambda$ is an automorphism of $C$, and $T_\lambda^iq = \lambda^iq$ lies on $L_m\cap C$ for all $i\in \Z$.
If more than $d$ of the numbers $\lambda^i$ are distinct, then B\'ezout's inequality gives $C = L_m$.
Otherwise, $\lambda$ is a root of unity of order at most $d$.

Choose $q\in C\cap L_m$ and consider the argument in the previous paragraph for $q$ together with each of the $d-1$ other points in $C\cap L_m$ in the role of $\lambda q$. This, together with $\lambda =1$, gives $d$ distinct values of $\lambda$, each of which is a root of unity of order at most $d$.
This implies that one of these $\lambda$ is a primitive $d$-th root of unity, i.e., $\lambda^d=1$ but $\lambda^k\neq 1$ for $0<k<d$. 

Let $T_\lambda$ an automorphism of $C$ with $\lambda$ a primitive $d$-th root of unity.
Write $q=(q_x,q_y)$.
Then, for any $m$ as above,
$\lambda^i q_x$ must be a root of $f(x,mx)$ for each $i=0,\ldots,d-1$.
Thus
$$f(x,mx) = \alpha\prod_{i=0}^{d-1}(x - \lambda^iq_x) 
= \alpha (x^d-q_x^d)$$
for some $\alpha \in \C\backslash\{0\}$.
Because this holds for all but finitely many $m$,
it follows that $f(x,y)$ only has terms of degree $0$ or $d$, and (after scaling) there are $a_i,c\in \C$ such that
$$f(x,y) = \prod_{i=1}^d(y-a_ix) + c.$$ 

The lines $L_{a_i}$ are the asymptotes of $C$, and any automorphism of $C$ must permute these lines (i.e., it must permute the set $\{L_{a_i}\}$).
In Lemma \ref{lem:threeatinfinity} we will show that
$C$ has $O(d^2)$ automorphisms if it permutes a set of three or more lines, so we are done if at least three $a_i$ are distinct.
Otherwise, only two of the $a_i$ are distinct, which means that
$$f(x,y) = (y-b_1x)^k(y-b_2x)^\ell + c$$ 
for some integers $k,\ell$ and $b_1,b_2,c\in\C$.
This equation is linearly equivalent to $x^ky^\ell=1$.
\end{proof}

\begin{lemma}\label{lem:origin}
Suppose $C$ is not a line and contains the origin.
Then $C$ has $O(d^2)$ automorphisms, unless it is linearly equivalent to $x^k=y^\ell$,
with $k,\ell\geq 1$.
\end{lemma}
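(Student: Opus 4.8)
The plan is to mirror the proof of Lemma \ref{lem:notorigin}, the essential new feature being that the origin now lies on $C$ and is a fixed point of every automorphism (every invertible linear map fixes the origin), so the lowest-degree part of $f$ enters the picture. Write $f = f_e + f_{e+1} + \cdots + f_d$, with $f_j$ homogeneous of degree $j$ and $f_e,f_d\neq 0$; since $C$ passes through the origin we have $e\geq 1$, and since $C$ is irreducible and not a line we have $e<d$ (otherwise $f=f_d$ is a product of linear forms and hence reducible). Suppose $C$ has more than $d^2$ automorphisms and fix matrices $A_1,\ldots,A_{d^2+1}$ among them. As in Lemma \ref{lem:notorigin}, for all but finitely many $m$ the line $L_m$ avoids the tangent directions at the origin (the lines of $f_e=0$), the directions of $f_d=0$, all other tangents and singularities, and makes the lines $A_iL_m$ pairwise distinct; for such $m$, $L_m$ meets $C$ at the origin with multiplicity exactly $e$ and in $d-e$ further distinct nonzero points.

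Next I would run the scaling argument on these nonzero intersection points. Each $T_\lambda$ fixes both the origin and the line $L_m$, so the set $G:=\{\lambda\in\C\backslash\{0\}: T_\lambda \text{ is an automorphism of } C\}$ acts on the $d-e$ nonzero points of $C\cap L_m$. Exactly as in Lemma \ref{lem:notorigin}, if $q$ and $\mu q$ are two such nonzero points, then the $d^2+1$ points $\mu A_iq$ are distinct (they lie on the distinct lines $A_iL_m$) and lie on both $C$ and $T_\mu(C)$, so B\'ezout's inequality forces $T_\mu$ to be an automorphism. Hence $G$ acts transitively with trivial point stabilizers, so $|G|=d-e$. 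A finite subgroup of $\C\backslash\{0\}$ is cyclic, so $G$ is generated by a primitive $(d-e)$-th root of unity $\lambda_0$, and the nonzero points of $C\cap L_m$ are $q,\lambda_0 q,\ldots,\lambda_0^{d-e-1}q$.

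From here the shape of $f$ follows. For a nonzero $q=(q_x,q_y)\in C\cap L_m$, the roots of $f(x,mx)$ are $0$ with multiplicity $e$ together with the distinct values $\lambda_0^iq_x$, so
$$f(x,mx)=\gamma\, x^e\prod_{i=0}^{d-e-1}(x-\lambda_0^iq_x)=\gamma\, x^e\big(x^{d-e}-q_x^{d-e}\big),$$
which contains only the powers $x^e$ and $x^d$. Since $f_j(1,m)$ is a polynomial in $m$ vanishing for infinitely many $m$ whenever $e<j<d$, each such $f_j\equiv 0$, i.e.\ $f=f_d+f_e$. Every automorphism $T$ satisfies $f\circ T^{-1}=c\,f$; matching homogeneous parts gives $f_d\circ T^{-1}=c\,f_d$ and $f_e\circ T^{-1}=c\,f_e$, so $T$ permutes the lines of $Z(f_d)$ and the lines of $Z(f_e)$, hence permutes their union $\mathcal{L}$. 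If $|\mathcal{L}|\geq 3$, Lemma \ref{lem:threeatinfinity} gives $O(d^2)$ automorphisms.

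Finally I would dispose of the case $|\mathcal{L}|\leq 2$. It cannot happen that $|\mathcal{L}|=1$: then $f_d$ and $f_e$ are powers of a common linear form $\ell$, so $\ell\mid f$, contradicting irreducibility. If $|\mathcal{L}|=2$, a linear change of coordinates sends the two lines to $x=0$ and $y=0$, so $f=c_d\,x^ay^{d-a}+c_e\,x^{a'}y^{e-a'}$. Irreducibility forbids $x$ or $y$ from dividing $f$, forcing $\min(a,a')=0$ and $\min(d-a,e-a')=0$; since $e\geq 1$, the only solutions are $(a,a')=(0,e)$ or $(d,0)$, giving $f=c_d\,y^d+c_e\,x^e$ or $f=c_d\,x^d+c_e\,y^e$. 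Either way $C$ is linearly equivalent to $x^k=y^\ell$ with $\{k,\ell\}=\{d,e\}$, both $\geq 1$, as claimed. The step I expect to require the most care is the second one: correctly accounting for the intersection multiplicity $e$ at the origin and pinning down $|G|=d-e$, so that the two-term shape of $f(x,mx)$ — and hence the reduction to $f=f_d+f_e$ — comes out exactly.
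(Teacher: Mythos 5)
Your proof is correct and follows essentially the same route as the paper: a generic line $L_m$ through the origin meets $C$ in the origin plus a cyclic orbit of nonzero points under scalings, forcing $f=f_d+f_e$, after which automorphisms permute the lines of $Z(f_d)\cup Z(f_e)$ and Lemma \ref{lem:threeatinfinity} (or the two-line monomial analysis) finishes. Your write-up is in fact somewhat more careful than the paper's at the points you flagged --- the orbit-stabilizer count $|G|=d-e$, the $f\circ T^{-1}=cf$ justification for permuting the two sets of lines, and the explicit elimination of the $|\mathcal{L}|\leq 2$ cases via irreducibility --- but these are refinements of the same argument, not a different one.
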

\begin{proof}
Now $C$ need not have exactly $d$ distinct points on most lines $L_m$, since if it has a singularity at the origin, it may have high intersection multiplicity with all lines at the origin.
However, there is a $k\leq d$ such that most lines have $|L_m\cap C|=k$.
By the same argument as in Lemma \ref{lem:notorigin}, we can reduce to the case where 
$T_\lambda$ is an automorphism, with $\lambda^k=1$,
and $C\cap L_m$ consists of the points $\lambda^iq$ for $i = 0,\ldots, k$.
Hence, for most $L_m$ we have
$$f(x,mx) = (\alpha x^k+\beta)x^{d-k} = \alpha x^d+\beta x^{d-k},$$
and it follows that
$$f(x,y) = a\prod_{i=1}^d(y-a_ix) 
+ b\prod_{j=1}^{d-k}(y-b_jx).$$ 

Any automorphism must permute the asymptotes $L_{a_i}$, and it must also permute the lines $L_{b_j}$, because these are the tangent lines of $C$ at the origin.
Note that the lines $L_{b_j}$ are distinct from the lines $L_{a_i}$ because $f$ is irreducible.
By Lemma \ref{lem:threeatinfinity}, if at least three of all these lines together are distinct, then $C$ has $O(d^2)$ automorphisms. Otherwise, we must have all $a_i$ equal and all $b_j$ equal, so 
$$f(x,y) = a(y-a'x)^d + b(y-b'x)^{d-k},$$
which is linearly equivalent to $x^d=y^{d-k}$.
\end{proof}

\begin{lemma}\label{lem:threeatinfinity}
Let $\mathcal{L}$ be a set of lines through the origin in $\C^2$, with $3\leq |\mathcal{L}|\leq 2d$. 
Then an algebraic curve $C\subset \C^2$ has $O(d^2)$ automorphisms that permute $\mathcal{L}$.
\end{lemma}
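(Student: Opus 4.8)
The plan is to bound the number of automorphisms of $C$ that permute a fixed finite set $\mathcal{L}$ of lines through the origin by exploiting the \emph{action on $\mathcal{L}$}. The key observation is that such an automorphism $T$ induces a permutation of the $|\mathcal{L}|$ lines, and since the lines are concurrent at the origin, $T$ fixes the origin, so the permutation action is faithful on the \emph{directions} (the corresponding points at infinity). The idea is to show that an automorphism permuting $\mathcal{L}$ is essentially determined by its action on a small number of lines, so that the total count is controlled by the number of admissible permutations together with a bounded number of linear maps realizing each permutation.

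First I would set up coordinates so that $T$ fixes the origin (each line in $\mathcal{L}$ passes through the origin, and $T$ maps lines through the origin to lines through the origin, so the common point $0$ is fixed). Then $T$ acts on the set of slopes $\{m : L_m \in \mathcal{L}\}$ via a M\"obius-type transformation on the points at infinity, i.e., on $\mathbb{P}^1$. The crucial step is the classical fact that a projective transformation of $\mathbb{P}^1$ is determined by its action on three distinct points. Since $|\mathcal{L}|\geq 3$, I would pick three distinct lines in $\mathcal{L}$; the action of $T$ on $\mathbb{P}^1$ is pinned down once we know the images of these three directions, and each image must again be one of the $|\mathcal{L}|\leq 2d$ directions. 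This gives at most $(2d)^3 = O(d^3)$ possible actions on $\mathbb{P}^1$ — but I want $O(d^2)$, so I would instead argue that the image of \emph{two} of the three points can be chosen freely (at most $(2d)^2$ ways), after which the cross-ratio constraint forces the image of the third, and only finitely many (in fact $O(1)$) of the remaining linear maps are consistent. The refinement to reach exactly $O(d^2)$ is where I would spend the most care.

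The remaining step is to pass from ``the induced projective action on the directions'' back to the linear map $T$ itself. A linear map $T\in\mathrm{GL}_2(\C)$ is determined by its induced map on $\mathbb{P}^1$ only up to a scalar $T \mapsto \lambda T$, so fixing the permutation of $\mathcal{L}$ leaves a one-parameter family of candidate automorphisms. To cut this down to finitely many, I would use that $T$ must preserve the curve $C$ itself, not merely the line configuration: among the scalar multiples $\lambda T$ with a fixed projective action, only finitely many can satisfy $T(C)=C$, because a scaling $T_\lambda$ that is an automorphism of $C$ must have $\lambda$ a root of unity of bounded order (by the B\'ezout argument already used in Lemmas \ref{lem:notorigin} and \ref{lem:origin}: the orbit $\{\lambda^i q\}$ of a point $q\in C$ off the fixed directions lies on a single line meeting $C$ in at most $d$ points, forcing $\lambda^d=1$). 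Thus each projective action lifts to $O(d)$ genuine automorphisms, and multiplying the $O(d^2)$-ish count of projective actions by $O(d)$ must ultimately be organized so the product is $O(d^2)$ — another reason the bookkeeping in the previous paragraph matters.

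The main obstacle I anticipate is precisely this combination of two bounds — the number of admissible permutations/projective actions and the number of linear lifts of each — in a way that yields $O(d^2)$ rather than a larger power of $d$. The honest difficulty is that a naive count (three points $\times$ $2d$ choices each, times $O(d)$ scalars) overshoots; the argument must exploit that once the projective action is fixed, the scalar lift is tightly constrained by $C$, and that the cross-ratio invariance removes a degree of freedom from the choice of images. I would therefore structure the proof so that the two constraints are applied simultaneously: fix the images of two directions ($O(d^2)$ choices), and then argue that the full automorphism $T$ (scalar included) is determined up to an $O(1)$ ambiguity by requiring $T(C)=C$ together with consistency on a third direction. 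Getting the exact exponent to land at $d^2$ is the delicate heart of the lemma.
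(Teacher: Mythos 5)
Your overall architecture matches the paper's: pass to the induced M\"obius action on the directions at infinity, bound the number of possible induced actions, and then bound the number of linear lifts of each action using $T(C)=C$. The second half of your plan (each projective action lifts to at most $O(d)$ genuine automorphisms, because the scalar ambiguity is resolved by forcing $T(q)$ onto an invariant line meeting $C$ in at most $d$ points, or equivalently by the root-of-unity/B\'ezout argument) is essentially what the paper does and is sound.

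The gap is in the first half, and you correctly identify its location but your proposed fix does not work. The idea ``fix the images of two directions, then the cross-ratio constraint forces the image of the third'' fails because $\mathrm{PGL}_2(\C)$ is sharply $3$-transitive: any three distinct points can be sent to any three distinct points, so prescribing two images leaves a full one-parameter family of M\"obius transformations and places no constraint on the third image (cross-ratio only constrains quadruples, and even when $|\mathcal{L}|\geq 4$ this does not obviously yield $O(d^2)$ overall once multiplied by the $O(d)$ lifts). The paper closes this gap with a structural input absent from your proposal: the set $G_\infty$ of induced M\"obius transformations is first shown to be a \emph{finite group}, and then the classification of finite subgroups of the M\"obius group (cyclic, dihedral, or one of $A_4$, $S_4$, $A_5$) is invoked. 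In the cyclic case all elements share the same two fixed points, and in the dihedral case there is a pair of points that each element fixes or swaps; hence an element is determined by the image of a single point of $P_\infty$ avoiding that pair, giving $|G_\infty|\leq\max\{4d,60\}=O(d)$, which multiplied by the $O(d)$ lifts gives $O(d^2)$. Without this (or an equivalent) group-theoretic ingredient, your count stalls at $O(d^3)\cdot O(d)$, and the ``delicate heart'' you flag as unresolved is in fact the actual content of the lemma.
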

\begin{proof}
We work in the projective plane.
Let $L_\infty$ be the line at infinity and $P_\infty$ the set of points at infinity of the lines in $\mathcal{L}$,
so $3\leq |P_\infty|\leq 2d$.
For a linear $T$ on $\C^2$ we write $\varphi_T$ for the M\"obius transformation that $T$ induces on $L_\infty$. 
We note that any such M\"obius transformation is determined by its image on any three points.
Let $G$ be the group of automorphisms of $C$ that permute $\mathcal{L}$, and $G_\infty:=\{\varphi_T:T\in G\}$,
so every $\varphi\in G_\infty$ permutes $P_\infty$.

We first note that $G$ and $G_\infty$ are finite groups.
Since $|P_\infty|\geq 3$, a permutation of $P_\infty$ corresponds to at most one transformation in $G_\infty$, which implies that $G_\infty$ is finite.
To show that $G$ is finite, we show that for any $\varphi\in G_\infty$ there are finitely many $T\in G$ such that $\varphi_T=\varphi$.
Choose two points of $C$ in $\C^2$ that do not lie on the same line through the origin.
Then for a fixed $\varphi\in G_\infty$, any $T\in G$ with $\varphi_T=\varphi$ must send these two points to points on two fixed lines, and given the images of these two points, $T$ is determined. 
Since $C$ has at most $d$ points on these lines, there are finitely many possible images for these points, which implies that there are at most finitely many such $T$.
Thus $G$ is also finite.\footnote{This rough argument already gives a bound on $|G|$, but it is too large for our purposes.}

We now use some basic facts about M\"obius transformations, which can be found in for instance \cite[Chapter 3]{N97} or \cite[Chapter 2]{JS87}.
A M\"obius transformation of finite order has exactly two fixed points.
A finite subgroup of the group of M\"obius transformations is either a cyclic group, a dihedral group, or one of $S_4$, $A_4$, or $A_5$ (see \cite[Corollary 2.13.7]{JS87}), so $G_\infty$ must be one of these groups.
In the last three cases, $G_\infty$ has size at most $60$.
If $G_\infty$ is cyclic, then every $\varphi\in G_\infty$ has the same two fixed points.
Since $|P_\infty|\geq 3$, we can choose a $p\in P_\infty$ that is not one of the two fixed points,
and then choosing the image of $p$ from the $|P_\infty|\leq 2d$ candidates determines $\varphi$.
Thus $|G_\infty|\leq 2d$.
 If $G_\infty$ is dihedral, there are two points such that any $\varphi\in G_\infty$ either fixes them, or swaps them.
The same argument as for the cyclic case then gives that $|G_\infty|\leq 4d$.
Altogether we have $|G_\infty|\leq \max\{4d, 60\} = O(d)$.

Fix $\varphi\in G_\infty$ and choose a point $q\in C$ on a line $L$ through the origin that corresponds to a fixed point of $\varphi$. 
Then for any $T\in G$ with $\varphi_T=\varphi$, $T(q)$ must lie on a $L$, as well as on $C$. 
Since $C$ is not a line, it has at most $d$ points on $L$.
Thus there are at most $d$ choices for $T(q)$, and given this choice, $T$ is determined.
It follows that $|G|\leq d\cdot |G_\infty| = O(d^2)$.
\end{proof}


\section{Discussion}\label{sec:discussion}

\paragraph{Degree dependence.}
Let $F:\C^2\times\C^2\to\C$ be a polynomial function.
Given a set $S$ of $n$ points in $\C^2$, by interpolation there exists an algebraic curve of degree $O(n^{1/2})$ containing $S$.
Thus, a bound $\Omega(d^{-\alpha}n^{1+\beta})$ for the number of distinct values of $F$ on a curve gives a lower bound $|F(S)|=\Omega(n^{1+\beta-\alpha/2})$ on the number of distinct values of $F$.

In \cite{PZ14}, where $F(p,q) = (p_x-q_x)^2+(p_y-q_y)^2$ was the Euclidean distance function, the bound obtained (over $\R$) was $\Omega(d^{-11}n^{4/3})$, which clearly makes the interpolation argument above useless. Part of the goal for this paper was to see if this could be improved for bilinear forms. Over $\C$, our main bound from Theorem \ref{thm:main} also gives nothing.
Over $\R$, our proof would give $\Omega(d^{-2}n^{4/3})$ (mainly because the dependence on $d$ in the real equivalent of Theorem \ref{thm:incidencebound} would be better; see \cite{SZ14}).
Then interpolation gives $|F(S)|=\Omega(n^{1/3})$, which is more tangible but still rather weak.

We conclude that to obtain an interesting bound from this interpolation argument, one would have to improve the exponent $4/3$, or the dependence on $d$ in Theorem \ref{thm:incidencebound}.

\paragraph{Elekes-R\'onyai on curves.}
Our result fits into the general framework of Elekes and R\'onyai \cite{ER00},
which considers polynomial functions
$$F:X_1\times X_2 \to X_3,$$
for varieties $X_1,X_2,X_3$ of the same dimension.
Elekes and R\'onyai \cite{ER00} consider the case where $X_1=X_2=X_3=\R$, and proved that $F$ takes $\omega(n)$ values, unless it has one of the special forms $F(x,y) = G(H(x)+K(y))$ or $F(x,y)=G(H(x)\cdot K(y))$ for polynomials $G,H,K$.
The lower bound was improved by Raz, Sharir, and Solymosi \cite{RSS14} 
to $\Omega(n^{4/3})$.

In our case we have $X_1 = X_2 = C$ and $X_3 = \C$, and $F$ a bilinear polynomial.
We note that if $M$ is not invertible, we have $B(p,q) = L_1(p)\cdot L_2(q)$ for linear polynomials $L_1,L_2$, which one can see as an analog of the multiplicative form of Elekes and R\'onyai (an additive form is actually not possible here).
This (and other, unpublished, considerations) leads us to the following conjecture.

\begin{conjecture}
Let $C\subset \C^2$ be an algebraic curve of degree $d_C$ and $F:C\times C\to \C$ a polynomial of degree $d_F$.
Then for any $S\subset C$ we have
$$|F(S)|=\Omega_{d_C,d_F}(|S|^{4/3}),$$
unless $F(p,q) = G(H(p)+K(q))$, $F(p,q)=G(H(p)\cdot K(q))$,
or unless $C$ is rational.
\end{conjecture}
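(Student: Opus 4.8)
The plan is to read this conjecture as an Elekes--Szab\'o type expansion statement and to attack it by running the energy-and-incidence machinery of Section \ref{sec:proof} in the general setting, replacing the use of linear automorphisms (Theorem \ref{thm:autos}, Lemma \ref{lem:threeatinfinity}) by a structural dichotomy for algebraic \emph{correspondences} of $C$. Note first that the exception ``$C$ is rational'' generalizes the notion of a special curve from Theorem \ref{thm:main}, since every special curve is rational; so this is the natural analogue of the exceptional class already encountered, now for arbitrary polynomial functions rather than bilinear forms. The additive and multiplicative special forms $G(H(p)+K(q))$ and $G(H(p)\cdot K(q))$ are the curve-version of the Elekes--R\'onyai exceptions \cite{ER00,RSS14}, corresponding to the two one-dimensional algebraic groups $(\C,+)$ and $(\C^\times,\cdot)$.

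Concretely, I would first count quadruples $(p_i,p_j,q_s,q_t)$ with $F(p_i,q_s)=F(p_j,q_t)$. As in Lemma \ref{lem:lowerbound}, Cauchy--Schwarz gives the lower bound $|\CQ|\ge |S|^4/|F(S)|$. For the upper bound, for each pair $p_i,p_j$ I would define the level-set curve $C_{ij}:=\{(q,q')\in C\times C: F(p_i,q)=F(p_j,q')\}\subset \C^4$; as in Lemma \ref{lem:curves}, provided $F$ is nonconstant in each variable, $C_{ij}$ is a genuine algebraic curve whose degree is bounded in terms of $d_C$ and $d_F$ by B\'ezout. One then splits the pairs into a ``good'' family $\CC_1$, where all pairwise intersections $C_{ij}\cap C_{kl}$ are finite, and a ``bad'' family $\CC_0$ where some intersection is infinite, exactly mirroring the split in Section \ref{sec:proof}. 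On the good family the Solymosi--de Zeeuw bound of Theorem \ref{thm:incidencebound} gives a contribution of size $O_{d_C,d_F}(|S|^{8/3})$, which combined with the lower bound yields the desired $\Omega_{d_C,d_F}(|S|^{4/3})$.

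The crux is to control the bad family. An infinite intersection $C_{ij}\cap C_{kl}$ no longer corresponds to a linear automorphism, but to an algebraic correspondence $R\subset C\times C$, projecting dominantly to each factor, that is compatible with the level sets of $F$. I would therefore prove the analogue of Lemma \ref{lem:infiniteintersection} in this language, and then bound the number of such correspondences in order to make $|\CC_0|$ negligible, as in the final steps of Section \ref{sec:proof}. The key input is that a curve that is \emph{not} rational carries no positive-dimensional family of such correspondences: for genus $\ge 2$ this is a rigidity/finiteness statement in the spirit of de Franchis and Hurwitz (finiteness of the automorphism group and of dominant maps), which plays here the role that the $O(d^2)$ bound on linear automorphisms played in Theorem \ref{thm:autos}. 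The genus-$1$ case must be treated separately, since the translations of the elliptic group law do supply a one-parameter family of self-correspondences.

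The main obstacle, and the reason this remains a conjecture, is the final structural step: one must show that if the bad family is \emph{not} negligible --- equivalently, if $F$ admits a positive-dimensional family of level-set symmetries --- then $F$ is forced into one of the forms $G(H(p)+K(q))$ or $G(H(p)\cdot K(q))$, or else $C$ is rational. This is precisely an Elekes--Szab\'o classification on a curve, extending \cite{ER00,RSS14} from the line to an arbitrary $C$, and the hard part is to match the two nondegenerate one-parameter groups to the additive and multiplicative exceptions while ruling out any other degeneracy. The genus-$1$ case is the most delicate, since the elliptic group law can manufacture additive coincidences without $F$ literally having the form $G(H+K)$, so one must either show these are absorbed into the stated exceptions or exhibit them as genuine counterexamples that refine the conjecture. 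Finally, obtaining the explicit dependence $\Omega_{d_C,d_F}$ would require effective versions of the underlying finiteness theorems, which are substantially harder than their qualitative counterparts.
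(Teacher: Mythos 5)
The statement you are addressing is stated in the paper as a \emph{conjecture}: the paper offers no proof of it, only a heuristic justification of the exceptional classes (the additive and multiplicative forms as curve-analogues of the Elekes--R\'onyai exceptions, and rational curves as the curves admitting infinitely many higher-degree automorphisms via Hurwitz). Your proposal is therefore not being measured against an existing argument, and to your credit you do not claim to close the problem. The programme you describe --- quadruple counting via Cauchy--Schwarz, level-set curves $C_{ij}\subset\C^4$, the good/bad split according to finiteness of pairwise intersections, and the Solymosi--de Zeeuw incidence bound on the good part --- is exactly the architecture of Section \ref{sec:proof}, and those steps would indeed go through for a general polynomial $F$ with degrees controlled by $d_C$ and $d_F$.

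The genuine gap is the one you name yourself: controlling the bad family $\CC_0$. In the bilinear case an infinite intersection $C_{ij}\cap C_{kl}$ is literally the graph of a linear automorphism of $C$ (Lemma \ref{lem:infiniteintersection}), and Theorem \ref{thm:autos} bounds the number of such automorphisms by elementary means. For general $F$ the infinite components are arbitrary algebraic correspondences of $C$ compatible with the level sets of $F$, and the needed dichotomy --- either there are $O_{d_C,d_F}(1)$ of them, or their abundance forces $F$ into the form $G(H(p)+K(q))$ or $G(H(p)\cdot K(q))$ or forces $C$ to be rational --- is precisely an Elekes--Szab\'o classification on a curve. Nothing in the paper, nor in your sketch, supplies this step; it is the entire content of the conjecture, not a technical lemma one can defer. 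Your observation about genus $1$ is a substantive one and in fact probes a possible weakness of the conjecture as stated: translations on an elliptic curve give a one-parameter family of self-correspondences (morphisms of the curve, though not linear or polynomial maps of $\C^2$), and the paper's justification via Hurwitz's theorem only rules out such families for genus at least $2$, while the stated exception covers only genus $0$. Whether elliptic curves must be added to the exceptional class, or whether the resulting coincidences are always absorbed into the forms $G(H+K)$ and $G(H\cdot K)$, is an open question your proposal correctly isolates but does not resolve. Finally, note that even granting the qualitative classification, the conjectured bound carries explicit constants $\Omega_{d_C,d_F}$, so effective versions of the relevant finiteness statements (de Franchis-type bounds) would be required, as you acknowledge.
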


It seems reasonable to take rational curves as exceptions in this statement, because these are the curves that can have infinitely many automorphisms defined by higher-degree polynomials (essentially by Hurwitz's Theorem, see Section \ref{sec:autos}).
Of course, for specific functions the exact class of exceptions may be smaller.

When $F(p,q) = G(H(p)+K(q))$ or $F(p,q)=G(H(p)\cdot K(q))$, 
$|F(S_1,S_2)| = O(n)$ is possible for different sets $S_1, S_2$.
For the additive form, choose a set $S_1$ of intersection points of $C$ with the curve $H(p)=i$ for $i = 1,\ldots, |S|$, and a set $S_2$ of intersection points with $K(q)=j$ for $j= 1,\ldots, n$ (this is certainly possible over $\C$; over $\R$ one needs to be more careful).
For the multiplicative form, one can do the same with $H(p) = 2^i$ and $K(q)=2^j$.
However, it seems difficult to construct such an example with $S_1 = S_2$, unless $H=K$.

\paragraph{The exponent $4/3$.}
The exponent $4/3$ is not expected to be tight.
In all of the papers \cite{SSS13, PZ14, RSS14} that obtain it in this framework, the main open problem is to improve this exponent, perhaps as far as $\Omega(|S|^{2-\varepsilon})$.
In these proofs, the room for improvement seems to be in the incidence bound. 
Perhaps one can improve on the Szemer\'edi-Trotter-like exponent in Theorem \ref{thm:incidencebound} by using the specific nature of the incidence problem that one gets here, with the point set being a Cartesian product, and the curves being a very restricted family.
Indeed, the curves are dual to a point set that is also a Cartesian product.


\vspace{22pt}
\noindent{\bf\Large Acknowledgments.}\\
\vspace{-4pt}

\noindent Both authors were partially supported by Swiss National Science Foundation Grants 200020-144531 and 200021-137574.  Part of this research was performed during the second author's visit to the Institute for Pure and Applied Mathematics in Los Angeles, which is supported by the National Science Foundation. 
The authors thank J\'anos Pach for all his support.

\vspace{-3pt}

\end{document}